\pgfplotsset{compat=1.14}
\theoremstyle{plain}
\newtheorem{theorem}{Theorem}[section]
\newtheorem{lemma}[theorem]{Lemma}
\newtheorem{corollary}[theorem]{Corollary}
\newtheorem{proposition}[theorem]{Proposition}
\theoremstyle{remark}
\newtheorem{definition}[theorem]{Definition}
\begin{document}
	
	\title{Matrix quadratic risk of orthogonally invariant estimators\\ for a normal mean matrix}
	\author{Takeru~Matsuda\thanks{Graduate School of Information Science and Technology, the University of Tokyo \& RIKEN Center for Brain Science, e-mail: 
			\texttt{matsuda@mist.i.u-tokyo.ac.jp}}}
	
	\date{}
	
	\maketitle

\begin{abstract}
	In estimation of a normal mean matrix under the matrix quadratic loss, we develop a general formula for the matrix quadratic risk of orthogonally invariant estimators.
	The derivation is based on several formulas for matrix derivatives of orthogonally invariant functions of matrices.
	As an application, we calculate the matrix quadratic risk of a singular value shrinkage estimator motivated by Stein's proposal for improving on the Efron--Morris estimator 50 years ago.
\end{abstract}

\section{Introduction}
Suppose that we have a matrix observation $X \in \mathbb{R}^{n \times p}$ whose entries are independent normal random variables $X_{ij} \sim {\rm N} (M_{ij},1)$, where $M \in \mathbb{R}^{n \times p}$ is an unknown mean matrix. 	
In this setting, we consider estimation of $M$ under the {matrix quadratic loss} \citep{abu,Matsuda22}:
\begin{align}
	L(M,\hat{M}) = (\hat{M} - M)^{\top} (\hat{M} - M), \label{mat_loss}
\end{align}
which takes a value in the set of $p \times p$ positive semidefinite matrices.
The risk function of an estimator $\hat{M}=\hat{M}(X)$ is defined as $R(M,\hat{M}) = {\rm E}_M [ L ( M,\hat{M}(X) ) ]$, and an estimator $\hat{M}_1$ is said to dominate another estimator $\hat{M}_2$ if $R(M,\hat{M}_1) \preceq R(M,\hat{M}_2)$ for every $M$, where $\preceq$ is the L\"{o}wner order: $A \preceq B$ means that $B-A$ is positive semidefinite.
Thus, if $\hat{M}_1$ dominates $\hat{M}_2$ under the matrix quadratic loss, then
\begin{align*}
	{\rm E}_M [ \| (\hat{M}_1 - M) c \|^2 ] \leq {\rm E}_M [ \| (\hat{M}_2 - M) c \|^2 ]
\end{align*}
for every $M$ and $c \in \mathbb{R}^p$.
In particular, each column of $\hat{M}_1$ dominates that of $\hat{M}_2$ as an estimator of the corresponding column of $M$ under quadratic loss.
Recently, \cite{Matsuda22} investigated shrinkage estimation in this setting by introducing a concept called matrix superharmonicity, which can be viewed as a generalization of the theory by Stein \cite{Stein74} for a normal mean vector.
Note that shrinkage estimation of a normal mean matrix under the Frobenius loss, which is the trace of the matrix quadratic loss, has been well studied, e.g. \cite{Matsuda,Tsukuma08,Tsukuma,Yuasa,Yuasa2,Zheng}.

Many common estimators of a normal mean matrix are orthogonally invariant. 
Namely, they satisfy $\hat{M}(PXQ) = P \hat{M}(X) Q$ for any orthogonal matrices $P \in O(n)$ and $Q \in O(p)$.
It can be viewed as a generalization of the rotationally invariance of estimators for a normal mean vector, which is satisfied by many minimax shrinkage estimators \cite{shr_book}.
We focus on orthogonally invariant estimators given by
\begin{align}
	\hat{M} = X+\widetilde{\nabla}h(X), \label{pseudo}
\end{align}
where $h(X)$ is an orthogonally invariant function satisfying $h(PXQ)=h(X)$  for any orthogonal matrices $P \in O(n)$ and $Q \in O(p)$ and $\widetilde{\nabla}$ is the matrix gradient operator defined by \eqref{grad}.
For example, the maximum likelihood estimator $\hat{M}=X$ corresponds to $h(X)=0$.
The Efron--Morris estimator \cite{Efron72} defined by $\hat{M}=X(I-(n-p-1)(X^{\top}X)^{-1})$ when $n-p-1>0$ corresponds to $h(X)={-(n-p-1)/2} \cdot \log \det (X^{\top} X)$.
This estimator can be viewed as a matrix generalization of the James--Stein estimator and it is minimax under the Frobenius loss \cite{Efron72} as well as matrix quadratic loss \cite{Matsuda22}.
We will provide another example of an orthogonally invariant estimator of the form \eqref{pseudo} in Section~\ref{sec:example}.
Note that an estimator of the form \eqref{pseudo} is called a pseudo-Bayes estimator \cite{shr_book}, because it coincides with the (generalized) Bayes estimator when $h$ is given by the logarithm of the marginal distirbution of $X$ with respect to some prior on $M$ (Tweedie's formula).

In this study, to further the theory of shrinkage estimation under the matrix quadratic loss, we develop a general formula for the matrix quadratic risk of orthogonally invariant estimators of the form \eqref{pseudo}.
First, we prepare several matrix derivative formulas in Section~\ref{sec:matrix}.
Then, we derive the formula for the matrix quadratic risk in Section~\ref{sec:main}. 
Finally, we present an example in Section~\ref{sec:example}, which is motivated by Stein's proposal for improving on the Efron--Morris estimator 50 years ago \cite{Stein74}. 

\section{Matrix derivative formulas}\label{sec:matrix}
Here, we develop matrix derivative formulas based on \cite{Stein74}.
Note that, whereas \cite{Stein74} considered a setting where $X$ is a $p \times n$ matrix, here we take $X$ to be a $n \times p$ matrix.
In the following, the subscripts $a$, $b$, $\ldots$ run from $1$ to $n$ and the subscripts $i$, $j$, $\ldots$ run from $1$ to $p$.
We denote the Kronecker delta by $\delta_{ij}$.

We employ the following notations for matrix derivatives introduced in \cite{Matsuda22}.

\begin{definition}
	For a function $f: \mathbb{R}^{n \times p} \to \mathbb{R}$, its \textrm{matrix gradient} $\widetilde{\nabla} f: \mathbb{R}^{n \times p} \to \mathbb{R}^{n \times p}$ is defined as 
	\begin{align}
		(\widetilde{\nabla} f(X))_{ai} = \frac{\partial}{\partial X_{ai}} f(X). \label{grad}
	\end{align}
\end{definition}

\begin{definition}
For a $C^2$ function $f: \mathbb{R}^{n \times p} \to \mathbb{R}$, its {matrix Laplacian} $\widetilde{\Delta} f: \mathbb{R}^{n \times p} \to \mathbb{R}^{p \times p}$ is defined as 
\begin{align}
	(\widetilde{\Delta} f(X))_{ij} = \sum_{a=1}^n \frac{\partial^2}{\partial X_{ai} \partial X_{aj}} f(X). \label{lap}
\end{align}
\end{definition}

Let
\begin{align}
	X^{\top} X = V \Lambda V^{\top} \label{XtX}
\end{align}
be a spectral decomposition of $X^{\top} X$, where $V=(v_1,\dots,v_p)$ is an orthogonal matrix and $\Lambda = {\rm diag}(\lambda_1,\dots,\lambda_p)$ is a diagonal matrix.
Then, the derivatives of $\lambda$ and $V$ are obtained as follows.

\begin{lemma}
	The derivative of $\lambda_i$ is
\begin{align}\label{lambda_diff}
	\frac{\partial \lambda_i}{\partial X_{aj}} = 2  V_{ji} \sum_k X_{ak} V_{ki}.
\end{align}
	Thus,
\begin{align}
	\widetilde{\nabla} \lambda_i = 2 X v_i v_i^{\top}, \label{nabla_lambda}
\end{align}
	where $v_i$ is the $i$-th column vector of $V$. \end{lemma}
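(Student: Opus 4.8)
The plan is to compute $\partial \lambda_i / \partial X_{aj}$ by differentiating the eigenvalue relation $X^\top X v_i = \lambda_i v_i$, or equivalently the scalar identity $\lambda_i = v_i^\top X^\top X v_i$ (using $v_i^\top v_i = 1$). This is the standard perturbation-theoretic approach to eigenvalue derivatives, and the key simplification is that, because $v_i$ is a unit eigenvector, the terms arising from $\partial v_i/\partial X_{aj}$ will drop out.

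First I would write $\lambda_i = \sum_{b,c,k,l} V_{ki} X_{bk} X_{bl} V_{li}$, expanding $(X^\top X)_{kl} = \sum_b X_{bk} X_{bl}$ in the chosen eigenbasis. Differentiating with respect to $X_{aj}$ and applying the product rule gives three groups of terms: one from differentiating the explicit $X$-factors, and two from differentiating the $V$-factors. The explicit-$X$ differentiation yields $\partial X_{bk}/\partial X_{aj} = \delta_{ab}\delta_{jk}$, which collapses the sums to produce $2 V_{ji} \sum_k X_{ak} V_{ki}$, exactly the claimed right-hand side of \eqref{lambda_diff}.

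The main obstacle, and the step deserving the most care, is showing that the terms involving $\partial V_{ki}/\partial X_{aj}$ vanish. The cleaner route is to start from $\lambda_i = v_i^\top X^\top X v_i$ and differentiate, obtaining a middle term $2 (X^\top X v_i)^\top (\partial v_i/\partial X_{aj}) = 2 \lambda_i v_i^\top (\partial v_i/\partial X_{aj})$, where I have used the eigenvector equation $X^\top X v_i = \lambda_i v_i$. Then the normalization $v_i^\top v_i = 1$, differentiated, gives $v_i^\top (\partial v_i/\partial X_{aj}) = 0$, so this contribution is zero. What remains is only $v_i^\top (\partial (X^\top X)/\partial X_{aj}) v_i$, which is the explicit-$X$ term computed above. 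This makes the eigenvector-derivative terms disappear without ever needing an explicit formula for $\partial v_i/\partial X_{aj}$.

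Finally I would assemble \eqref{nabla_lambda} from \eqref{lambda_diff}. By definition, $(\widetilde{\nabla}\lambda_i)_{aj} = \partial \lambda_i/\partial X_{aj} = 2 V_{ji} \sum_k X_{ak} V_{ki} = 2 (Xv_i)_a (v_i)_j$, and recognizing this as the $(a,j)$ entry of the rank-one matrix $2 (X v_i) v_i^\top = 2 X v_i v_i^\top$ completes the derivation. A minor point to confirm is that the spectral decomposition is differentiable at $X$, which holds when the eigenvalue $\lambda_i$ is simple; for repeated eigenvalues the scalar identity $\lambda_i = v_i^\top X^\top X v_i$ with the first-order vanishing of the eigenvector term still delivers the directional derivative, so the formula persists by continuity.
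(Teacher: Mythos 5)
Your proof is correct, and it reaches the result by a genuinely different (more elementary) route than the paper. Both arguments ultimately rest on the same core identity, ${\rm d}\lambda_i = v_i^{\top}\, {\rm d}(X^{\top}X)\, v_i$, followed by the expansion ${\rm d}(X^{\top}X) = ({\rm d}X)^{\top}X + X^{\top}{\rm d}X$; the difference is how the eigenvector-derivative terms are eliminated. The paper differentiates the full spectral decomposition $X^{\top}X = V\Lambda V^{\top}$, conjugates by $V$, and invokes the antisymmetry of $V^{\top}{\rm d}V$ (obtained from $V^{\top}V = I_p$) to reach
\begin{align*}
(V^{\top}\cdot {\rm d}(X^{\top}X)\cdot V)_{ij} = (\lambda_j - \lambda_i)(V^{\top}{\rm d}V)_{ij} + \delta_{ij}\,{\rm d}\lambda_i,
\end{align*}
so that on the diagonal the eigenvector terms vanish because their prefactor $(\lambda_j-\lambda_i)$ does. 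You instead differentiate the Rayleigh-quotient identity $\lambda_i = v_i^{\top}X^{\top}X v_i$ and kill those terms using the eigenvector equation together with $v_i^{\top}(\partial v_i/\partial X_{aj}) = 0$ from the normalization. Your route is cleaner and self-contained for this lemma, and your closing remark on simple versus repeated eigenvalues is a point of care the paper leaves implicit. What the paper's heavier bookkeeping buys is reuse: the \emph{off-diagonal} part of the same displayed identity immediately gives $(V^{\top}{\rm d}V)_{ij} = (V^{\top}\cdot{\rm d}(X^{\top}X)\cdot V)_{ij}/(\lambda_j-\lambda_i)$, which is exactly what the next lemma needs for $\partial V_{ij}/\partial X_{ak}$; your approach would have to develop that machinery separately when the eigenvector derivatives are required.
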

\begin{proof}
By differentiating $V^{\top} V = I_p$ and using $({\rm d} V)^{\top} V=(V^{\top} {\rm d} V)^{\top}$, we obtain
\begin{align}
	V^{\top} {\rm d} V + (V^{\top} {\rm d} V)^{\top} = O, \label{anti}
\end{align}
which means the antisymmetricity of $V^{\top} {\rm d} V$.

Taking the differential of \eqref{XtX}, we have
\begin{align}
	{\rm d} (X^{\top} X) = ({\rm d} V) \Lambda V^{\top} + V ({\rm d} \Lambda) V^{\top} + V \Lambda ({\rm d} V)^{\top}. \label{dXX}
\end{align}
Then, multiplying \eqref{dXX} on the left by $V^{\top}$ and on the right by $V$, we obtain
\begin{align}
	V^{\top} \cdot {\rm d} (X^{\top} X) \cdot V = V^{\top} ({\rm d} V) \Lambda + {\rm d} \Lambda + \Lambda ({\rm d} V)^{\top} V. \label{VtV}
\end{align}
Since $\Lambda$ and ${\rm d} \Lambda$ are diagonal and $({\rm d} V)^{\top} V=(V^{\top} {\rm d} V)^{\top}$, the $(i,j)$-th entry of \eqref{VtV} yields
\begin{align*}
	(V^{\top} \cdot {\rm d} (X^{\top} X) \cdot V)_{ij} &= (V^{\top} {\rm d} V)_{ij} \lambda_j + \delta_{ij} {\rm d} \lambda_i + \lambda_i (({\rm d} V)^{\top} V)_{ij} \nonumber \\
	&= (V^{\top} {\rm d} V)_{ij} \lambda_j + \delta_{ij} {\rm d} \lambda_i + \lambda_i (V^{\top} {\rm d} V)_{ji}. 
\end{align*}
Since $(V^{\top} {\rm d} V)_{ji}=-(V^{\top} {\rm d} V)_{ij}$ from \eqref{anti}, we obtain
\begin{align}
	(V^{\top} \cdot {\rm d} (X^{\top} X) \cdot V)_{ij} &= (\lambda_j-\lambda_i) (V^{\top} {\rm d} V)_{ij} + \delta_{ij} {\rm d} \lambda_i. \label{ddd}
\end{align}

On the other hand, from ${\rm d} (X^{\top} X) = ({\rm d} X)^{\top} X + X^{\top} {\rm d} X$,
\begin{align}
	{\rm d} (X^{\top} X)_{ij} = \sum_a (({\rm d} X)_{ai} X_{aj} + X_{ai} ({\rm d} X)_{aj}). \label{dXtX}
\end{align}

	By taking $i=j$ in \eqref{ddd},
\begin{align*}
	{\rm d} \lambda_i = (V^{\top} \cdot {\rm d} (X^{\top} X) \cdot V)_{ii} = \sum_{j,k} (V^{\top})_{ij} ({\rm d} (X^{\top} X))_{jk} V_{ki}.
\end{align*}
	Then, by using \eqref{dXtX},
\begin{align*}
	{\rm d} \lambda_i &= \sum_{j,k} V_{ji} V_{ki} \sum_a (({\rm d} X)_{aj} X_{ak} + X_{aj} ({\rm d} X)_{ak}) \\
	&= 2 \sum_{j,k} V_{ji} V_{ki} \sum_a  X_{ak} ({\rm d} X)_{aj} \\
	&= 2 \sum_{a,j} V_{ji} \sum_k  X_{ak} V_{ki}  ({\rm d} X)_{aj}.
\end{align*}
	Thus, we obtain \eqref{lambda_diff} and it leads to \eqref{nabla_lambda}.
\end{proof}

\begin{lemma}
	The derivative of $V_{ij}$ is
	\begin{align}\label{Vdiff}
	\frac{\partial V_{ij}}{\partial X_{ak}} = \sum_{l \neq j} \frac{V_{il}}{\lambda_j-\lambda_l} ((XV)_{aj} V_{kl} + (XV)_{al} V_{kj}).
	\end{align}
\end{lemma}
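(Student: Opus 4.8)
The plan is to reuse the differential identity \eqref{ddd} established in the proof of the previous lemma, but now to extract information from its \emph{off-diagonal} entries rather than its diagonal ones. Setting $i=j$ in \eqref{ddd} produced ${\rm d}\lambda_i$; by contrast, taking the entries with distinct indices isolates the off-diagonal part of $V^\top {\rm d}V$, which is precisely the object encoding the infinitesimal rotation of the eigenvectors and hence ${\rm d}V$ itself.

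Concretely, first I would observe that for $l \neq j$ the Kronecker delta in \eqref{ddd} vanishes, so that
\begin{align*}
(V^\top \cdot {\rm d}(X^\top X) \cdot V)_{lj} = (\lambda_j - \lambda_l)(V^\top {\rm d}V)_{lj},
\end{align*}
and hence, assuming the eigenvalues are distinct,
\begin{align*}
(V^\top {\rm d}V)_{lj} = \frac{1}{\lambda_j - \lambda_l} (V^\top \cdot {\rm d}(X^\top X) \cdot V)_{lj}.
\end{align*}
Next I would recover ${\rm d}V$ from $V^\top {\rm d}V$ via the orthogonality identity ${\rm d}V = V(V^\top {\rm d}V)$, which gives $({\rm d}V)_{ij} = \sum_l V_{il}(V^\top {\rm d}V)_{lj}$. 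Here the antisymmetry \eqref{anti} is essential: the diagonal term $l=j$ satisfies $(V^\top {\rm d}V)_{jj}=0$ and therefore drops out, which is exactly what restricts the sum to $l \neq j$ and, conveniently, avoids the singular factor $1/(\lambda_j-\lambda_j)$. Substituting the expression above then yields
\begin{align*}
({\rm d}V)_{ij} = \sum_{l \neq j} \frac{V_{il}}{\lambda_j - \lambda_l} (V^\top \cdot {\rm d}(X^\top X) \cdot V)_{lj}.
\end{align*}

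Finally, I would expand $(V^\top \cdot {\rm d}(X^\top X) \cdot V)_{lj}$ using \eqref{dXtX} and the product rule, just as in the previous proof. The two contributions from ${\rm d}(X^\top X) = ({\rm d}X)^\top X + X^\top {\rm d}X$ contract with the columns of $V$ to produce the pair $(XV)_{aj} V_{kl}$ and $(XV)_{al} V_{kj}$ as the coefficient of $({\rm d}X)_{ak}$; reading off this coefficient gives \eqref{Vdiff}. I expect the only genuine obstacle to be bookkeeping rather than conceptual difficulty: one must carefully track the four summation indices through the two product-rule terms and relabel them so that the common factor $({\rm d}X)_{ak}$ can be cleanly extracted. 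The distinctness of the eigenvalues is used implicitly at the step where we divide by $\lambda_j - \lambda_l$, and this is the standard nondegeneracy assumption under which eigenvector derivatives are well defined.
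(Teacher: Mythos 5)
Your proposal is correct and follows essentially the same route as the paper's proof: extracting the off-diagonal part of $V^\top {\rm d}V$ from \eqref{ddd}, reconstructing ${\rm d}V = V(V^\top {\rm d}V)$ with the diagonal term vanishing by the antisymmetry \eqref{anti}, and expanding via \eqref{dXtX} to read off the coefficient of $({\rm d}X)_{ak}$. Your choice of summation labels even spares you the index relabeling the paper performs at the end, and your explicit remark about distinct eigenvalues makes precise an assumption the paper leaves implicit.
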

\begin{proof}
	From \eqref{anti}, we have $(V^{\top} {\rm d} V)_{ii}=0$.
	Also, from \eqref{ddd},
	\begin{align*}
		(V^{\top} {\rm d} V)_{ij} = \frac{1}{\lambda_j-\lambda_i} (V^{\top} \cdot {\rm d} (X^{\top} X) \cdot V)_{ij}
	\end{align*}
	for $i \neq j$.
	Therefore, 
	\begin{align*}
		{\rm d} V_{ij} &= (V V^{\top} {\rm d} V)_{ij} \\
		&= \sum_k V_{ik} (V^{\top} {\rm d} V)_{kj} \\
		&= \sum_{k \neq j} V_{ik} \frac{1}{\lambda_j-\lambda_k} (V^{\top} \cdot {\rm d} (X^{\top} X) \cdot V)_{kj} \\
		&= \sum_{k \neq j} \frac{1}{\lambda_j-\lambda_k} V_{ik} \sum_{l,m} V_{lk} {\rm d} (X^{\top} X)_{lm} V_{mj}.
	\end{align*}
	Then, by using \eqref{dXtX},
	\begin{align*}
	{\rm d} V_{ij} &= \sum_{k \neq j} \frac{1}{\lambda_j-\lambda_k} V_{ik} \sum_{l,m} V_{lk} \sum_a (({\rm d} X)_{al} X_{am} + X_{al} ({\rm d} X)_{am}) V_{mj} \\
	&= \sum_{k \neq j} \frac{1}{\lambda_j-\lambda_k} V_{ik} \sum_a \left(  (XV)_{aj} \sum_l V_{lk} ({\rm d} X)_{al} + (XV)_{ak} \sum_m V_{mj} ({\rm d} X)_{am} \right) \\
	&= \sum_l \sum_{k \neq j} \frac{V_{ik}}{\lambda_j-\lambda_k} \sum_a ((XV)_{aj} V_{lk}+(XV)_{ak} V_{lj}) ({\rm d} X)_{al} \\
	&= \sum_{a,k} \sum_{l \neq j} \frac{V_{il}}{\lambda_j-\lambda_l} ((XV)_{aj} V_{kl}+(XV)_{al} V_{kj}) ({\rm d} X)_{ak},
\end{align*}
	where we switched $k$ and $l$ in the last step.
	Thus, we obtain \eqref{Vdiff}.
\end{proof}

A function $h$ is said to be orthogonally invariant if it satisfies $h(PXQ)=h(X)$ for any orthogonal matrices $P \in O(n)$ and $Q \in O(p)$.
Such a function can be written as $h(X)=H(\lambda)$, where $\lambda$ is the eigenvalues of $X^{\top} X$ as given by \eqref{XtX}, and its derivatives are calculated as follows.

\begin{lemma}
	The matrix gradient \eqref{grad} of an orthogonally invariant function $h(X)=H(\lambda)$ is
	\begin{align}\label{nabla_h}
	\widetilde{\nabla} h = 2 \sum_i \frac{\partial H}{\partial \lambda_i} X v_i v_i^{\top}.
	\end{align}
	Thus,
	\begin{align}\label{nabla_h2}
	(\widetilde{\nabla} h)^{\top} (\widetilde{\nabla} h) = 4 \sum_i \left( \frac{\partial H}{\partial \lambda_i} \right)^2 \lambda_i v_i v_i^{\top} = V D V^{\top},
	\end{align}
	where $D$ is the $p \times p$ diagonal matrix given by
	\begin{align*}
	D_{kk} = 4 \lambda_k \left( \frac{\partial H}{\partial \lambda_k} \right)^2.
	\end{align*}
\end{lemma}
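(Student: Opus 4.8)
The plan is to derive both claims by the chain rule together with the spectral-decomposition identities already established, so no genuinely new machinery is needed. For the gradient formula \eqref{nabla_h}, I would write $h(X)=H(\lambda)$ and differentiate through $\lambda$: by the chain rule,
\begin{align*}
	\widetilde{\nabla} h = \sum_i \frac{\partial H}{\partial \lambda_i} \, \widetilde{\nabla} \lambda_i,
\end{align*}
and then substitute $\widetilde{\nabla} \lambda_i = 2 X v_i v_i^{\top}$ from \eqref{nabla_lambda}. This immediately produces \eqref{nabla_h}, so this half is essentially a one-line application of the preceding lemma.

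For the second claim, I would set $G := \widetilde{\nabla} h = 2 \sum_i \frac{\partial H}{\partial \lambda_i} X v_i v_i^{\top}$ and compute $G^{\top} G$ by multiplying out the double sum:
\begin{align*}
	G^{\top} G = 4 \sum_{i,j} \frac{\partial H}{\partial \lambda_i} \frac{\partial H}{\partial \lambda_j} \, v_i v_i^{\top} X^{\top} X v_j v_j^{\top}.
\end{align*}
The crucial simplification is to insert the spectral decomposition $X^{\top} X = V \Lambda V^{\top}$ so that $X^{\top} X v_j = \lambda_j v_j$ and hence $v_i^{\top} X^{\top} X v_j = \lambda_j \, \delta_{ij}$, using the orthonormality of the columns of $V$. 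This collapses the middle factor to $\lambda_j \delta_{ij} v_i v_j^{\top}$, which annihilates every off-diagonal ($i \neq j$) term and leaves $G^{\top} G = 4 \sum_i \left( \partial H / \partial \lambda_i \right)^2 \lambda_i \, v_i v_i^{\top}$, the first equality in \eqref{nabla_h2}.

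The final step is purely a matter of recognizing the spectral form: since $\sum_k D_{kk} v_k v_k^{\top} = V D V^{\top}$ for any diagonal $D$, reading off $D_{kk} = 4 \lambda_k \left( \partial H / \partial \lambda_k \right)^2$ gives the stated factorization $G^{\top} G = V D V^{\top}$. I do not expect a genuine obstacle here; the only point requiring care is the vanishing of the cross terms, which rests entirely on the orthogonality relation $v_i^{\top} v_j = \delta_{ij}$ combined with $v_j$ being an eigenvector of $X^{\top} X$. Once that identity is applied the computation is mechanical.
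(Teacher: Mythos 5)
Your proposal is correct and follows essentially the same route as the paper: chain rule plus the identity $\widetilde{\nabla}\lambda_i = 2Xv_iv_i^{\top}$ for \eqref{nabla_h}, then expansion of the double sum and collapse of the cross terms via $v_i^{\top}X^{\top}Xv_j = \lambda_j\delta_{ij}$ (the paper writes this as $\Lambda_{ij}$ using $X^{\top}X = V\Lambda V^{\top}$ and $V^{\top}V = I_p$, which is the same computation) for \eqref{nabla_h2}. No gaps.
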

\begin{proof}
	From \eqref{nabla_lambda},
	\begin{align*}
\widetilde{\nabla} h = \sum_i \frac{\partial H}{\partial \lambda_i} \widetilde{\nabla} \lambda_i = 2 \sum_i \frac{\partial H}{\partial \lambda_i} X v_i v_i^{\top},
	\end{align*}
	which yields \eqref{nabla_h}.
	Then, by using $X^{\top}X = V \Lambda V^{\top}$ and $V^{\top} V = I_p$,
	\begin{align*}
	(\widetilde{\nabla} h)^{\top} (\widetilde{\nabla} h) &= 4 \sum_{i,j} \frac{\partial H}{\partial \lambda_i} \frac{\partial H}{\partial \lambda_j} v_i v_i^{\top} X^{\top} X v_j v_j^{\top} \\
	&= 4 \sum_{i,j} \frac{\partial H}{\partial \lambda_i} \frac{\partial H}{\partial \lambda_j} v_i \Lambda_{ij} v_j^{\top} \\
	&= 4 \sum_{i} \left( \frac{\partial H}{\partial \lambda_i} \right)^2 \lambda_i v_i v_i^{\top},
	\end{align*}
	which yields \eqref{nabla_h2}.
\end{proof}

\begin{lemma}
	The matrix Laplacian \eqref{lap} of an orthogonally invariant function $h(X)=H(\lambda)$ is
\begin{align}
	\widetilde{\Delta} h = V D V^{\top}, \label{mLap}
\end{align}
	where $D$ is the $p \times p$ diagonal matrix given by
\begin{align*}
	D_{kk} = 4 \lambda_k \frac{\partial^2 H}{\partial \lambda_k^2} + 2n \frac{\partial H}{\partial \lambda_k} + 2 \sum_{l \neq k} \frac{\lambda_l}{\lambda_k-\lambda_l} \left( \frac{\partial H}{\partial \lambda_k} - \frac{\partial H}{\partial \lambda_l} \right).
\end{align*}
\end{lemma}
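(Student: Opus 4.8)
The plan is to differentiate the gradient formula \eqref{nabla_h} once more and collect terms, using the eigenvalue and eigenvector derivatives \eqref{lambda_diff} and \eqref{Vdiff} together with the orthonormality relation $\sum_a (XV)_{al}(XV)_{am} = \lambda_l \delta_{lm}$, which follows from $(XV)^{\top}(XV) = V^{\top} X^{\top} X V = \Lambda$ by \eqref{XtX}. Writing the gradient entrywise as $(\widetilde{\nabla} h)_{ai} = 2\sum_m (\partial H/\partial\lambda_m)(XV)_{am} V_{im}$ and inserting it into the definition \eqref{lap}, I would expand $(\widetilde{\Delta} h)_{ij} = \sum_a (\partial/\partial X_{aj})(\widetilde{\nabla} h)_{ai}$ by the product rule into three groups: (A) the term where the scalar coefficient $\partial H/\partial\lambda_m$ is differentiated, (B) the term where the factor $(XV)_{am}$ is differentiated, and (C) the term where the factor $V_{im}$ is differentiated.

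For group (A), the chain rule gives $\partial(\partial H/\partial\lambda_m)/\partial X_{aj} = \sum_l (\partial^2 H/\partial\lambda_m\partial\lambda_l)(\partial\lambda_l/\partial X_{aj})$; substituting \eqref{lambda_diff} and summing over $a$ via the orthonormality relation collapses the double sum to its diagonal and produces exactly the term $4\lambda_k (\partial^2 H/\partial\lambda_k^2)$ on the $k$-th diagonal entry. For group (B), I would split $(XV)_{am} = \sum_b X_{ab} V_{bm}$ into its explicit and implicit dependence on $X$: the explicit piece uses $\partial X_{ab}/\partial X_{aj} = \delta_{bj}$, and once summed over $a$ (which contributes the factor $n$) it yields the term $2n\,(\partial H/\partial\lambda_k)$, while the implicit piece, involving $\partial V_{bm}/\partial X_{aj}$, is deferred and treated alongside (C).

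The main work, and the step I expect to be most error-prone, is showing that the two $V$-derivative contributions—the implicit part of (B) and all of (C)—combine into the stated off-diagonal sum. Substituting \eqref{Vdiff} into each and summing over $a$ with the orthonormality relation kills the cross contractions $\sum_a (XV)_{al}(XV)_{am}$ for $l\neq m$ and leaves only $\sum_a (XV)_{al}^2 = \lambda_l$. After this simplification, group (C) reduces to $2\sum_m\sum_{l\neq m}\frac{\lambda_m}{\lambda_m-\lambda_l}(\partial H/\partial\lambda_m)\,V_{il}V_{jl}$ and the implicit part of (B) to $2\sum_m\sum_{l\neq m}\frac{\lambda_l}{\lambda_m-\lambda_l}(\partial H/\partial\lambda_m)\,V_{im}V_{jm}$. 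Relabelling the summation indices $m \leftrightarrow l$ in the (C) expression so that $V_{im}V_{jm}$ becomes the surviving eigenvector factor, it turns into $-2\sum_m\sum_{l\neq m}\frac{\lambda_l}{\lambda_m-\lambda_l}(\partial H/\partial\lambda_l)\,V_{im}V_{jm}$, and adding the two gives $2\sum_m\sum_{l\neq m}\frac{\lambda_l}{\lambda_m-\lambda_l}\left(\frac{\partial H}{\partial\lambda_m}-\frac{\partial H}{\partial\lambda_l}\right)V_{im}V_{jm}$, precisely the off-diagonal term.

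Collecting group (A), the explicit part of (B), and this combined $V$-derivative term, every contribution carries the common factor $V_{ik}V_{jk}$. Hence $(\widetilde{\Delta} h)_{ij} = \sum_k V_{ik}V_{jk}\, D_{kk}$, i.e. $\widetilde{\Delta} h = V D V^{\top}$ with $D_{kk} = 4\lambda_k (\partial^2 H/\partial\lambda_k^2) + 2n(\partial H/\partial\lambda_k) + 2\sum_{l\neq k}\frac{\lambda_l}{\lambda_k-\lambda_l}(\partial H/\partial\lambda_k - \partial H/\partial\lambda_l)$, as claimed. The only delicate points are the consistent bookkeeping of dummy indices in the index relabelling and the repeated use of the orthonormality relation to discard all off-diagonal contractions over $a$.
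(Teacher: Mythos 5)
Your proposal is correct and follows essentially the same route as the paper: both differentiate the gradient formula \eqref{nabla_h} by the product rule, use \eqref{lambda_diff} and \eqref{Vdiff} together with the contraction $(XV)^{\top}(XV)=\Lambda$ to kill off-diagonal terms, and finish with the same index-relabelling step to symmetrize the eigenvector-derivative contributions into the stated difference form. The only difference is organizational — the paper keeps the two eigenvector-derivative pieces bundled as the derivative of $v_k v_k^{\top}$ (its equation \eqref{VV}) before contracting, while you contract your groups (B)-implicit and (C) separately — which does not change the substance of the argument.
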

\begin{proof}
	From \eqref{nabla_h},
\begin{align}
	\frac{\partial^2 h}{\partial X_{ai} \partial X_{aj}} &= \frac{\partial}{\partial X_{ai}} \left( 2 \sum_k \frac{\partial H}{\partial \lambda_k} X v_k v_k^{\top} \right)_{aj} \nonumber \\
	&= 2 \sum_k \left( \left( \widetilde{\nabla} \frac{\partial H}{\partial \lambda_k} \right)_{ai} (X v_k v_k^{\top})_{aj} + \frac{\partial H}{\partial \lambda_k} (v_k v_k^{\top})_{ij} + \frac{\partial H}{\partial \lambda_k} \sum_l X_{al} \frac{\partial}{\partial X_{ai}} (v_k v_k^{\top})_{lj} \right) \nonumber \\
	&= 2 \sum_k \left( 2 \sum_l \frac{\partial^2 H}{\partial \lambda_k \partial \lambda_l} (X v_l v_l^{\top})_{ai} (X v_k v_k^{\top})_{aj} + \frac{\partial H}{\partial \lambda_k} V_{ik} V_{jk} \right. \nonumber \\
	&  \qquad \qquad \qquad \qquad \qquad \left. + \frac{\partial H}{\partial \lambda_k} \sum_l  X_{al} \left( \frac{\partial V_{lk}}{\partial X_{ai}} V_{jk} + V_{lk} \frac{\partial V_{jk}}{\partial X_{ai}} \right) \right). \label{h2diff}
\end{align}
	Also, from \eqref{Vdiff},
\begin{align}
	\frac{\partial V_{lk}}{\partial X_{ai}} V_{jk} + V_{lk} \frac{\partial V_{jk}}{\partial X_{ai}} &= \sum_{m \neq k} \frac{V_{lm}}{\lambda_k-\lambda_m} ((XV)_{ak}V_{im}+(XV)_{am}V_{ik}) V_{jk} \nonumber \\
	& \qquad + V_{lk} \sum_{m \neq k} \frac{V_{jm}}{\lambda_k-\lambda_m} ((XV)_{ak}V_{im}+(XV)_{am}V_{ik}) \nonumber \\
	&= \sum_{m \neq k} \frac{(V_{lm}V_{jk}+V_{lk}V_{jm})((XV)_{ak}V_{im}+(XV)_{am}V_{ik})}{\lambda_k-\lambda_m}. \label{VV}
\end{align}
	By substituting \eqref{VV} into \eqref{h2diff} and taking the sum,
\begin{align*}
	(\widetilde{\Delta} h)_{ij} &= \sum_{a=1}^n \frac{\partial^2 h}{\partial X_{ai} \partial X_{aj}} \\
	&= 4 \sum_{k,l} \frac{\partial^2 H}{\partial \lambda_k \partial \lambda_l} (v_l v_l^{\top} X^{\top} X v_k v_k^{\top})_{ij} 	+ 2n \sum_k \frac{\partial H}{\partial \lambda_k} V_{ik} V_{jk} \\
	& \qquad + 2 \sum_{k,l} \frac{\partial H}{\partial \lambda_k} \sum_{m \neq k} \frac{(V_{lm}V_{jk}+V_{lk}V_{jm})((X^{\top}XV)_{lk}V_{im}+(X^{\top}XV)_{lm}V_{ik})}{\lambda_k-\lambda_m}  \\
	&= \sum_k \left( 4 \lambda_k \frac{\partial^2 H}{\partial \lambda_k^2} + 2n \frac{\partial H}{\partial \lambda_k} \right)V_{ik} V_{jk} \\
	& \qquad + 2 \sum_{k,l} \frac{\partial H}{\partial \lambda_k} \sum_{m \neq k} \frac{(V_{lm}V_{jk}+V_{lk}V_{jm})(\lambda_k V_{lk}V_{im}+\lambda_m V_{lm}V_{ik})}{\lambda_k-\lambda_m}  \\
	&= \sum_k \left( 4 \lambda_k \frac{\partial^2 H}{\partial \lambda_k^2} + 2n \frac{\partial H}{\partial \lambda_k} \right)V_{ik} V_{jk} + 2 \sum_k \frac{\partial H}{\partial \lambda_k} \sum_{m \neq k} \frac{\lambda_k V_{jm} V_{im} + \lambda_m V_{jk} V_{ik}}{\lambda_k-\lambda_m}  \\
\end{align*}
	where we used $X^{\top}X = V \Lambda V^{\top}$ and
\begin{align*}
	&\sum_l {(V_{lm}V_{jk}+V_{lk}V_{jm})(\lambda_k V_{lk}V_{im}+\lambda_m V_{lm}V_{ik})} \\
	=& \lambda_k (\delta_{km} V_{jk} + V_{jm}) V_{im} + \lambda_m (V_{jk} + \delta_{km} V_{jm}) V_{ik}.
\end{align*}
	Then,
\begin{align*}
	(\widetilde{\Delta} h)_{ij} &= \sum_k \left( 4 \lambda_k \frac{\partial^2 H}{\partial \lambda_k^2} + 2n \frac{\partial H}{\partial \lambda_k} + 2 \frac{\partial H}{\partial \lambda_k} \sum_{m \neq k} \frac{\lambda_m}{\lambda_k-\lambda_m} \right)V_{ik} V_{jk} + 2 \sum_k \lambda_k \frac{\partial H}{\partial \lambda_k} \sum_{m \neq k} \frac{V_{im} V_{jm}}{\lambda_k-\lambda_m}  \\
	&= \sum_k \left( 4 \lambda_k \frac{\partial^2 H}{\partial \lambda_k^2} + 2n \frac{\partial H}{\partial \lambda_k} + 2 \sum_{m \neq k} \frac{\lambda_m}{\lambda_k-\lambda_m} \left( \frac{\partial H}{\partial \lambda_k} - \frac{\partial H}{\partial \lambda_m} \right) \right) V_{ik} V_{jk},
\end{align*}
	where we used
\begin{align*}
	\sum_k \lambda_k \frac{\partial H}{\partial \lambda_k} \sum_{m \neq k} \frac{V_{im} V_{jm}}{\lambda_k-\lambda_m} = \sum_m V_{im} V_{jm}  \sum_{k \neq m} \frac{\lambda_k}{\lambda_k-\lambda_m} \frac{\partial H}{\partial \lambda_k} = \sum_k V_{ik} V_{jk}  \sum_{m \neq k} \frac{\lambda_m}{\lambda_m-\lambda_k} \frac{\partial H}{\partial \lambda_m}.
\end{align*}
	Thus, by rewriting $m$ to $l$, we obtain \eqref{mLap}.
\end{proof}

By taking the trace of the matrix Laplacian \eqref{mLap}, we have
\begin{align*}
	{\Delta} h &= {\rm tr} (\widetilde{\Delta} h) = {\rm tr} (D) \\
	&= \sum_k \left( 4 \lambda_k \frac{\partial^2 H}{\partial \lambda_k^2} + 2n \frac{\partial H}{\partial \lambda_k} + 2 \sum_{l \neq k} \frac{\lambda_l}{\lambda_k-\lambda_l} \left( \frac{\partial H}{\partial \lambda_k} - \frac{\partial H}{\partial \lambda_l} \right) \right) \\
	&= \sum_k \left( 4 \lambda_k \frac{\partial^2 H}{\partial \lambda_k^2} + 2n \frac{\partial H}{\partial \lambda_k} \right) + 2 \sum_k \frac{\partial H}{\partial \lambda_k} \sum_{l \neq k} \frac{\lambda_k+\lambda_l}{\lambda_k-\lambda_l} \\
	&= 4 \sum_k \lambda_k \frac{\partial^2 H}{\partial \lambda_k^2} + 2 \sum_k \left( n-p+1 + 2 \lambda_k \sum_{l \neq k} \frac{1}{\lambda_k-\lambda_l} \right) \frac{\partial H}{\partial \lambda_k},
\end{align*}
where we used
\begin{align}
	\sum_{l \neq k} \frac{\lambda_k+\lambda_l}{\lambda_k-\lambda_l} &=  \lambda_k \sum_{l \neq k} \frac{1}{\lambda_k-\lambda_l} + \sum_{l \neq k} \frac{\lambda_l}{\lambda_k-\lambda_l} \nonumber \\
	&=  \lambda_k \sum_{l \neq k} \frac{1}{\lambda_k-\lambda_l} + \sum_{l \neq k} \left( \frac{\lambda_k}{\lambda_k-\lambda_l} - 1 \right) \nonumber \\
	&=  2 \lambda_k \sum_{l \neq k} \frac{1}{\lambda_k-\lambda_l} -p+1.  \label{lambda_sum}
\end{align}
This coincides with the Laplacian formula in \cite{Stein74}.

\section{Risk formula}\label{sec:main}
Now, we derive a general formula for the matrix quadratic risk of orthogonally invariant estimators of the form \eqref{pseudo}.

\begin{theorem}\label{th_main}
	Let $h(X)=H(\lambda)$ be an orthogonally invariant function.
	Then, the matrix quadratic risk of an estimator $\hat{M}=X+\widetilde{\nabla}h(X)$ is given by
\begin{align}
	{\rm E}_M (\hat{M}-M)^{\top} (\hat{M}-M) &= n I_p + {\rm E}_M [ V D V^{\top} ], \label{qrisk}
\end{align}
	where $D$ is the $p \times p$ diagonal matrix given by
\begin{align*}
	D_{kk} &= 4 \left( 2 \lambda_k \frac{\partial^2 H}{\partial \lambda_k^2} + n \frac{\partial H}{\partial \lambda_k} + \lambda_k \left( \frac{\partial H}{\partial \lambda_k} \right)^2 + \sum_{l \neq k} \frac{\lambda_l}{\lambda_k-\lambda_l} \left( \frac{\partial H}{\partial \lambda_k} - \frac{\partial H}{\partial \lambda_l} \right) \right).
\end{align*}
\end{theorem}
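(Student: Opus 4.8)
The plan is to expand the quadratic loss into four terms and reduce each to a quantity already computed in Section~\ref{sec:matrix}. Writing $\hat{M} - M = (X - M) + \widetilde{\nabla} h$, we have
\begin{align*}
(\hat{M} - M)^{\top}(\hat{M} - M) = (X-M)^{\top}(X-M) + (X-M)^{\top} \widetilde{\nabla} h + (\widetilde{\nabla} h)^{\top}(X-M) + (\widetilde{\nabla} h)^{\top} \widetilde{\nabla} h.
\end{align*}
The first term has expectation $n I_p$, since its $(i,j)$ entry is $\sum_a (X_{ai}-M_{ai})(X_{aj}-M_{aj})$ with expectation $\sum_a \delta_{ij} = n \delta_{ij}$. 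The last term is handled directly by \eqref{nabla_h2}, which gives $(\widetilde{\nabla} h)^{\top}\widetilde{\nabla} h = V D_1 V^{\top}$ with $(D_1)_{kk} = 4\lambda_k (\partial H / \partial \lambda_k)^2$.

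For the two cross terms, I would apply the scalar Stein (integration-by-parts) identity coordinatewise. The $(i,j)$ entry of $(X-M)^{\top}\widetilde{\nabla} h$ equals $\sum_a (X_{ai}-M_{ai})\, \partial h / \partial X_{aj}$, and since $X_{ai} \sim {\rm N}(M_{ai},1)$ independently, integration by parts gives ${\rm E}_M[(X_{ai}-M_{ai})\, \partial h / \partial X_{aj}] = {\rm E}_M[\partial^2 h / (\partial X_{ai} \partial X_{aj})]$. Summing over $a$ yields ${\rm E}_M[(X-M)^{\top}\widetilde{\nabla} h] = {\rm E}_M[\widetilde{\Delta} h]$, and because $\widetilde{\Delta} h$ is symmetric, the transposed cross term $(\widetilde{\nabla} h)^{\top}(X-M)$ contributes the same, so together they give $2{\rm E}_M[\widetilde{\Delta} h]$. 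Invoking \eqref{mLap} writes this as $V(2 D_2)V^{\top}$, where $D_2$ is the diagonal matrix of that lemma.

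The final step is to combine the three nontrivial contributions. The observation that makes the formula clean is that $(\widetilde{\nabla} h)^{\top}\widetilde{\nabla} h$ and $\widetilde{\Delta} h$ are simultaneously diagonalized by the common eigenvector matrix $V$ of $X^{\top} X$; hence $V D_1 V^{\top} + 2 V D_2 V^{\top} = V(D_1 + 2 D_2)V^{\top}$, and adding the diagonal entries reproduces the stated $D_{kk}$ after factoring out $4$. I expect the main obstacle to be justifying the interchange of expectation and differentiation in Stein's identity, which requires mild integrability and smoothness assumptions on $H$ so that the boundary terms vanish; these are the same regularity conditions implicitly underlying the matrix-derivative lemmas. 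A secondary point of care is that keeping $V$ fixed when summing the two matrix expressions inside the expectation is legitimate precisely because both share the same $V$.
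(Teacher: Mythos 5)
Your proposal is correct, and its second half (combining \eqref{nabla_h2} and \eqref{mLap} through the common eigenvector matrix $V$ and factoring out $4$) is exactly the paper's computation. The only divergence is in how you reach the intermediate identity
\begin{align*}
	{\rm E}_M (\hat{M}-M)^{\top} (\hat{M}-M) = n I_p + {\rm E}_M [2 \widetilde{\Delta} h(X) + \widetilde{\nabla}h(X)^{\top} \widetilde{\nabla}h(X)]:
\end{align*}
the paper quotes the general matrix unbiased-risk-estimate formula \eqref{sure} from \cite{Matsuda22}, valid for any weakly differentiable $g$, and then specializes it to $g=\widetilde{\nabla}h$ using $\widetilde{\mathrm{div}} \circ \widetilde{\nabla}=\widetilde{\Delta}$ and the symmetry of $\widetilde{\Delta}h$; you instead re-derive this identity for the gradient case from scratch, expanding the quadratic form and applying the scalar Stein integration-by-parts identity coordinatewise, with the symmetry of $\widetilde{\Delta}h$ (equality of mixed partials) collapsing the two cross terms into $2\,{\rm E}_M[\widetilde{\Delta}h]$. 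What you gain is self-containedness --- your argument does not lean on the external reference, and it makes explicit where the regularity (weak differentiability and integrability so the boundary terms in the integration by parts vanish) enters; what the paper's route gains is that the cited formula \eqref{sure} covers arbitrary weakly differentiable $g$, not only gradient fields, which is why the paper can frame Theorem~\ref{th_main} as a specialization and flag the extension beyond pseudo-Bayes estimators as future work. Your closing remark about simultaneous diagonalization being the key structural fact is exactly right: it is what makes the final $D$ diagonal in the same basis $V$.
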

\begin{proof}
	From \cite{Matsuda22}, the matrix quadratic risk of an estimator $\hat{M}=X+g(X)$ with a weakly differentiable function $g$ is 
\begin{align}\label{sure}
	{\rm E}_M (\hat{M}-M)^{\top} (\hat{M}-M) = n I_p + {\rm E}_M [\widetilde{\mathrm{div}} \ g(X) + (\widetilde{\mathrm{div}} \ g(X) )^{\top} + g(X)^{\top} g(X)],
\end{align}
	where the {matrix divergence} $\widetilde{\mathrm{div}} \ g: \mathbb{R}^{n \times p} \to \mathbb{R}^{p \times p}$ of a function $g: \mathbb{R}^{n \times p} \to \mathbb{R}^{n \times p}$ is defined as 
	\begin{align*}
		(\widetilde{\mathrm{div}} \ g(X))_{ij} = \sum_{a=1}^n \frac{\partial}{\partial X_{ai}} g_{aj}(X).
	\end{align*}
	Therefore, by substituting $g(X)=\widetilde{\nabla}h(X)$ and using $\widetilde{\mathrm{div}} \circ \widetilde{\nabla}=\widetilde{\Delta}$,
\begin{align*}
	{\rm E}_M (\hat{M}-M)^{\top} (\hat{M}-M) = n I_p + E_M [2 \widetilde{\Delta} h(X) + \widetilde{\nabla}h(X)^{\top} \widetilde{\nabla}h(X)].
\end{align*}
	Thus, by using \eqref{nabla_h2} and \eqref{mLap}, we obtain \eqref{qrisk}.
\end{proof}

By taking the trace of \eqref{qrisk} and using \eqref{lambda_sum}, we obtain the following formula for the Frobenius risk of orthogonally invariant estimators, which coincides with the one given by \cite{Stein74}.

\begin{corollary}
	Let $h(X)=H(\lambda)$ be an orthogonally invariant function.
	Then, the Frobenius risk of an estimator $\hat{M}=X+\widetilde{\nabla}h(X)$ is given by
\begin{align*}
	&{\rm E}_M \| \hat{M}-M \|_{\mathrm{F}}^2 \\
	=& np + 4 {\rm E}_M \left[ \sum_k \left( 2 \lambda_k \frac{\partial^2 H}{\partial \lambda_k^2} + n \frac{\partial H}{\partial \lambda_k} + \lambda_k \left( \frac{\partial H}{\partial \lambda_k} \right)^2 \right) + \sum_k \frac{\partial H}{\partial \lambda_k} \sum_{l \neq k} \frac{\lambda_k + \lambda_l}{\lambda_k-\lambda_l} \right] \\
	=& np + 4 {\rm E}_M \left[ \sum_k \left( 2 \lambda_k \frac{\partial^2 H}{\partial \lambda_k^2} + n \frac{\partial H}{\partial \lambda_k} + \lambda_k \left( \frac{\partial H}{\partial \lambda_k} \right)^2 \right) + \sum_k \frac{\partial H}{\partial \lambda_k} \left( 2 \lambda_k \sum_{l \neq k} \frac{1}{\lambda_k-\lambda_l} -p+1 \right) \right].
\end{align*}
\end{corollary}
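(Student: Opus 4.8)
The plan is to obtain the Frobenius risk as the trace of the matrix quadratic risk. Because the Frobenius loss $\| \hat{M} - M \|_{\mathrm{F}}^2 = {\rm tr}\,[(\hat{M} - M)^{\top}(\hat{M} - M)]$ is precisely the trace of the matrix quadratic loss \eqref{mat_loss}, linearity of the trace and of the expectation gives ${\rm E}_M \| \hat{M} - M \|_{\mathrm{F}}^2 = {\rm tr}\, {\rm E}_M (\hat{M} - M)^{\top}(\hat{M} - M)$. I would therefore start from the identity \eqref{qrisk} established in Theorem~\ref{th_main} and simply take the trace of both sides.

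The two terms on the right-hand side of \eqref{qrisk} are handled directly. For the first, ${\rm tr}(n I_p) = np$. For the second, using the orthogonality $V^{\top} V = I_p$ together with the cyclic property of the trace, ${\rm tr}(V D V^{\top}) = {\rm tr}(D V^{\top} V) = {\rm tr}(D) = \sum_k D_{kk}$, with $D_{kk}$ as in Theorem~\ref{th_main}. This already yields ${\rm E}_M \| \hat{M} - M \|_{\mathrm{F}}^2 = np + {\rm E}_M \big[ \sum_k D_{kk} \big]$, and pulling out the factor $4$ recovers the three ``diagonal'' contributions $2 \lambda_k \partial^2 H / \partial \lambda_k^2$, $n\, \partial H / \partial \lambda_k$, and $\lambda_k (\partial H / \partial \lambda_k)^2$ summed over $k$.

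The only nontrivial step is to rewrite the remaining double sum $\sum_k \sum_{l \neq k} \frac{\lambda_l}{\lambda_k - \lambda_l}\left( \frac{\partial H}{\partial \lambda_k} - \frac{\partial H}{\partial \lambda_l} \right)$ coming from the off-diagonal part of $D_{kk}$. I would split it into the piece carrying $\partial H / \partial \lambda_k$ and the piece carrying $\partial H / \partial \lambda_l$, and in the latter relabel the summation indices $k \leftrightarrow l$. Since the factor $\lambda_l / (\lambda_k - \lambda_l)$ becomes $\lambda_k / (\lambda_l - \lambda_k) = -\lambda_k / (\lambda_k - \lambda_l)$ under this swap, the two pieces recombine into $\sum_k \frac{\partial H}{\partial \lambda_k} \sum_{l \neq k} \frac{\lambda_k + \lambda_l}{\lambda_k - \lambda_l}$, which is exactly the first displayed expression in the corollary. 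This is the same symmetrization already used to pass from the matrix Laplacian \eqref{mLap} to the scalar Laplacian in the display preceding \eqref{lambda_sum}, so I expect no surprises.

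Finally I would invoke the elementary identity \eqref{lambda_sum}, namely $\sum_{l \neq k} \frac{\lambda_k + \lambda_l}{\lambda_k - \lambda_l} = 2 \lambda_k \sum_{l \neq k} \frac{1}{\lambda_k - \lambda_l} - p + 1$, to convert the first displayed form of the corollary into the second. The main obstacle is purely the index bookkeeping in the symmetrization step; once that antisymmetric-to-symmetric rearrangement is carried out correctly, the rest is trace linearity and the known algebraic identity \eqref{lambda_sum}.
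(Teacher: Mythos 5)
Your proposal is correct and matches the paper's approach exactly: the paper also obtains this corollary by taking the trace of \eqref{qrisk} and invoking \eqref{lambda_sum}. The intermediate details you supply (the cyclic-trace reduction ${\rm tr}(VDV^{\top})={\rm tr}(D)$ and the $k \leftrightarrow l$ symmetrization of the double sum) are precisely the steps the paper leaves implicit, and your symmetrization is carried out correctly.
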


We derived the risk formula for orthogonally invariant estimators of the form \eqref{pseudo}, which are called pseudo-Bayes estimators \cite{shr_book}.
The class of pseudo-Bayes estimators includes all Bayes and generalized Bayes estimators.
It is an interesting future work to extend the current result to general orthogonally invariant estimators.
Also, extension to unknown covariance case is an important future problem.	
Note that Section 6.6.2 of \cite{Tsukuma} derived a risk formula for a class of estimators in the unknown covariance setting.

\section{Example}\label{sec:example}
We provide an example of the application of Theorem~\ref{th_main}.
Let $X = U \Sigma V^{\top}$ with $U \in \mathbb{R}^{n \times p}$, $\Sigma = {\rm diag} (\sigma_1, \ldots, \sigma_p)$ and $V \in \mathbb{R}^{p \times p}$ be a singular value decomposition of $X$, where $U^{\top} U = V^{\top} V = I_p$ and $\sigma_1 \geq \cdots \geq \sigma_p \geq 0$ are the singular values of $X$.
We consider an orthogonally invariant estimator given by
\begin{align}
	\hat{M} = U \cdot {\rm diag} \left( \sigma_1 - \frac{c_1}{\sigma_1}, \dots, \sigma_p - \frac{c_p}{\sigma_p} \right) \cdot V^{\top}, \label{shr}
\end{align}
where $c_1,\dots,c_p \geq 0$.

\begin{lemma}
The estimator \eqref{shr} can be written in the form \eqref{pseudo} with 
\begin{align*}
	h(X) = -\frac{1}{2} \sum_{k=1}^p {c_k} \log \lambda_k = - \sum_{k=1}^p {c_k} \log \sigma_k,
\end{align*}
where $\lambda_1,\dots,\lambda_p$ are the eigenvalues of $X^{\top} X$ as shown in \eqref{XtX}.
\end{lemma}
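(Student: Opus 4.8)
The plan is to exhibit $\widetilde{\nabla}h$ explicitly via the gradient formula \eqref{nabla_h} and then add it to $X$, recognizing the resulting sum as the outer-product form of \eqref{shr}. The first observation is that the two stated expressions for $h$ agree: since $X = U\Sigma V^\top$ gives $X^\top X = V\Sigma^2 V^\top$, comparison with the spectral decomposition \eqref{XtX} identifies the orthogonal matrix $V$ in both decompositions and yields $\lambda_k = \sigma_k^2$. Hence $\log\lambda_k = 2\log\sigma_k$ and $-\tfrac12\sum_k c_k\log\lambda_k = -\sum_k c_k\log\sigma_k$, as claimed. I would work throughout on the generic set where $\sigma_1 > \cdots > \sigma_p > 0$, so that $H$ is differentiable and the columns of $V$ are determined; the division by $\sigma_k$ already forces $\sigma_p > 0$.

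Next I would compute the partial derivatives of $H(\lambda) = -\tfrac12\sum_k c_k\log\lambda_k$, which are simply $\partial H/\partial\lambda_k = -c_k/(2\lambda_k)$. Substituting into \eqref{nabla_h} gives
\begin{align*}
\widetilde{\nabla}h = 2\sum_i \frac{\partial H}{\partial\lambda_i}\, X v_i v_i^\top = -\sum_i \frac{c_i}{\lambda_i}\, X v_i v_i^\top.
\end{align*}
The key simplification is $X v_i = U\Sigma V^\top v_i = \sigma_i u_i$, where $u_i$ denotes the $i$-th column of $U$; combined with $\lambda_i = \sigma_i^2$ this turns each summand into $-(c_i/\sigma_i)\, u_i v_i^\top$.

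Finally I would write $X = \sum_i \sigma_i u_i v_i^\top$ and add, obtaining
\begin{align*}
X + \widetilde{\nabla}h = \sum_i \left( \sigma_i - \frac{c_i}{\sigma_i} \right) u_i v_i^\top,
\end{align*}
which is exactly the outer-product expansion of \eqref{shr}. The computation is short, and the only point requiring care is the identification of the spectral decomposition \eqref{XtX} with the singular value decomposition, together with the attendant non-uniqueness of $V$ when singular values coincide. I expect this bookkeeping, rather than any analytic difficulty, to be the main thing to get right; it is handled by restricting to the generic full-rank case with distinct singular values and then invoking continuity.
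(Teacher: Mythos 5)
Your proof is correct and follows essentially the same route as the paper: both compute $\widetilde{\nabla}h = -\sum_i \frac{c_i}{\lambda_i} X v_i v_i^{\top}$ (you via the general formula \eqref{nabla_h}, the paper via \eqref{nabla_lambda} and the chain rule on $\log\lambda_k$, which amounts to the same calculation) and then identify this with $-U\,\mathrm{diag}(c_1/\sigma_1,\dots,c_p/\sigma_p)\,V^{\top}$ using the SVD and $\lambda_k=\sigma_k^2$. Your explicit attention to the generic case of distinct, nonzero singular values is a minor point of additional care that the paper leaves implicit.
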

\begin{proof}
	From \eqref{nabla_lambda},
\begin{align*}
	\widetilde{\nabla} \log \lambda_k = \frac{1}{\lambda_k} \widetilde{\nabla} \lambda_k = \frac{2}{\lambda_k} X v_k v_k^{\top}.
\end{align*}
	Thus,
\begin{align*}
	\widetilde{\nabla} h(X) = -X \sum_k \frac{c_k}{\lambda_k} v_k v_k^{\top} = -U \cdot {\rm diag} \left( \frac{c_1}{\sigma_1}, \dots, \frac{c_p}{\sigma_p} \right) \cdot V^{\top},
\end{align*}
	where we used $X=U \Sigma V^{\top}$ and $\lambda_k=\sigma_k^2$.
	Therefore, the estimator \eqref{shr} is written as $\hat{M}=X+\widetilde{\nabla} h(X)$.
\end{proof}

\begin{theorem}\label{th_shr}
	The matrix quadratic risk of the estimator \eqref{shr} is given by
	\begin{align}
		{\rm E}_M (\hat{M}-M)^{\top} (\hat{M}-M) &= n I_p + {\rm E}_M [ V D V^{\top} ], \label{qrisk_stein}
	\end{align}
	where $D$ is the $p \times p$ diagonal matrix given by
	\begin{align*}
	D_{kk} = \frac{1}{\lambda_k} c_k (c_k-2n+4) - \frac{2}{\lambda_k} \sum_{l \neq k} \frac{c_k \lambda_l - c_l \lambda_k}{\lambda_k-\lambda_l}.
\end{align*}
\end{theorem}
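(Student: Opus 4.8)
The plan is to apply Theorem~\ref{th_main} directly to the orthogonally invariant function $h(X)=H(\lambda)$ identified in the preceding lemma, namely $H(\lambda)=-\frac{1}{2}\sum_{k=1}^p c_k \log \lambda_k$. Since the risk formula \eqref{qrisk} already expresses the matrix quadratic risk in the form $nI_p + {\rm E}_M[VDV^{\top}]$ with $D$ diagonal, the outer structure of \eqref{qrisk_stein} is inherited verbatim and no further probabilistic argument is needed. The entire task reduces to evaluating the diagonal entries $D_{kk}$ of Theorem~\ref{th_main} for this particular $H$ and verifying that they collapse to the stated expression.

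First I would record the two derivatives of $H$ that enter the general $D_{kk}$ formula. Because $H$ is separable in the eigenvalues, these are immediate:
\begin{align*}
	\frac{\partial H}{\partial \lambda_k} = -\frac{c_k}{2\lambda_k}, \qquad \frac{\partial^2 H}{\partial \lambda_k^2} = \frac{c_k}{2\lambda_k^2}.
\end{align*}
Substituting these into the three single-index terms of $D_{kk}$ from Theorem~\ref{th_main} and multiplying out the overall factor of $4$ gives
\begin{align*}
	4\left( 2\lambda_k \cdot \frac{c_k}{2\lambda_k^2} + n\left(-\frac{c_k}{2\lambda_k}\right) + \lambda_k \cdot \frac{c_k^2}{4\lambda_k^2} \right) = \frac{1}{\lambda_k}\left( 4c_k - 2nc_k + c_k^2 \right) = \frac{1}{\lambda_k}c_k(c_k-2n+4),
\end{align*}
which already matches the first half of the target formula.

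For the cross-term I would first combine the difference of derivatives into a single fraction, $\partial H/\partial\lambda_k - \partial H/\partial\lambda_l = (c_l\lambda_k - c_k\lambda_l)/(2\lambda_k\lambda_l)$, so that the factor $\lambda_l$ sitting in the numerator of the summand $\lambda_l/(\lambda_k-\lambda_l)$ cancels against the $\lambda_l$ in the denominator of this fraction. The summand then simplifies to $(c_l\lambda_k - c_k\lambda_l)/\bigl(2\lambda_k(\lambda_k-\lambda_l)\bigr)$, and after restoring the overall factor of $4$ the cross-term becomes $-\frac{2}{\lambda_k}\sum_{l\neq k}(c_k\lambda_l - c_l\lambda_k)/(\lambda_k-\lambda_l)$, which is exactly the second half of the claimed $D_{kk}$. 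Adding the two pieces yields the stated formula.

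I do not anticipate a genuine obstacle, since the result is a direct specialization of Theorem~\ref{th_main} and the computation is purely algebraic. The only step requiring mild care is the bookkeeping in the cross-term: one must correctly carry out the cancellation in $\frac{\lambda_l}{\lambda_k-\lambda_l}\cdot\frac{c_l\lambda_k-c_k\lambda_l}{2\lambda_k\lambda_l}$ and track the sign so that the result is written as $(c_k\lambda_l-c_l\lambda_k)/(\lambda_k-\lambda_l)$ with the leading minus sign, rather than its negative.
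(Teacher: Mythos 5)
Your proposal is correct and follows essentially the same route as the paper's own proof: a direct specialization of Theorem~\ref{th_main} to $H(\lambda)=-\frac{1}{2}\sum_k c_k\log\lambda_k$, with the same derivative computations and the same algebraic reduction of the single-index and cross terms. The only cosmetic difference is that you combine $\partial H/\partial\lambda_k-\partial H/\partial\lambda_l$ into one fraction before cancelling, whereas the paper factors out $-2$ first; both yield the stated $D_{kk}$ identically.
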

\begin{proof}
To apply Theorem~\ref{th_main}, let
\begin{align*}
	H(\lambda) = -\frac{1}{2} \sum_k c_k \log \lambda_k.
\end{align*}
We have
\begin{align*}
	\frac{\partial H}{\partial \lambda_k} = -\frac{c_k}{2 \lambda_k}, \quad \frac{\partial^2 H}{\partial \lambda_k^2} = \frac{c_k}{2 \lambda_k^2}.
\end{align*}
Thus,
\begin{align*}
	D_{kk} &= 4 \left( 2 \lambda_k \frac{\partial^2 H}{\partial \lambda_k^2} + n \frac{\partial H}{\partial \lambda_k} + \lambda_k \left( \frac{\partial H}{\partial \lambda_k} \right)^2 + \sum_{l \neq k} \frac{\lambda_l}{\lambda_k-\lambda_l} \left( \frac{\partial H}{\partial \lambda_k} - \frac{\partial H}{\partial \lambda_l} \right) \right) \\
	&= \frac{1}{\lambda_k} c_k (c_k-2n+4) -2 \sum_{l \neq k} \frac{\lambda_l}{\lambda_k-\lambda_l} \left( \frac{c_k}{\lambda_k} - \frac{c_l}{\lambda_l} \right) \\
	&= \frac{1}{\lambda_k} c_k (c_k-2n+4) - \frac{2}{\lambda_k} \sum_{l \neq k} \frac{c_k \lambda_l - c_l \lambda_k}{\lambda_k-\lambda_l}.
\end{align*}
Therefore, we obtain \eqref{qrisk_stein} from Theorem~\ref{th_main}.
\end{proof}

The Efron--Morris estimator \cite{Efron72} corresponds to \eqref{shr} with $c_k \equiv n-p-1$.
In this case,
\begin{align*}
	D_{kk} &= \frac{1}{\lambda_k} (n-p-1) (-n-p+3) - \frac{2}{\lambda_k} (n-p-1) \sum_{l \neq k} \frac{\lambda_l - \lambda_k}{\lambda_k-\lambda_l} \\
	&= \frac{1}{\lambda_k} (n-p-1) (-n-p+3) + \frac{2}{\lambda_k} (n-p-1) (p-1) \\
	& = - \frac{1}{\lambda_k} (n-p-1)^2.
\end{align*}
Thus, its matrix quadratic risk \eqref{qrisk_stein} is
\begin{align}
	{\rm E}_M (\hat{M}-M)^{\top} (\hat{M}-M) &= n I_p - (n-p-1)^2 {\rm E}_M \left[ (X^{\top} X)^{-1} \right]. \label{EMrisk}
\end{align}
This coincides with the result in \cite{Matsuda22}.

Motivated by Stein's proposal \cite{Stein74} for improving on the Efron--Morris estimator, we consider the estimator \eqref{shr} with $c_k = n+p-2k-1$.
In the following, we call it ``Stein's estimator" for convenience.
Stein \cite{Stein74} stated that the positive-part of Stein's estimator dominates the positive-part of the Efron--Morris estimator under the Frobenius loss\footnote{page 31 of \cite{Stein74}: ``It is not difficult to verify, and follows from the general formula (14) that the estimate (8) is better than the crude Efron--Morris estimate (9)." However, we could not find its proof. It is an interesting future work to fill in this gap.}
, where ``positive-part" means the modification of \eqref{shr} given by
\begin{align}
	\hat{M} = U \cdot {\rm diag} \left( \left(\sigma_1 - \frac{c_1}{\sigma_1} \right)_+, \dots, \left( \sigma_p - \frac{c_p}{\sigma_p} \right)_+ \right) \cdot V^{\top}, \label{positive}
\end{align}
where $(a)_+=\max(0,a)$.
It is known that the estimator \eqref{shr} is dominated by its positive-part \eqref{positive} under the Frobenius loss \cite{Tsukuma08}.

\begin{proposition}\label{prop_stein}
	The matrix quadratic risk of Stein's estimator (estimator \eqref{shr} with $c_k = n+p-2k-1$) is given by
\begin{align*}
	{\rm E}_M (\hat{M}-M)^{\top} (\hat{M}-M) &= n I_p + {\rm E}_M [ V D V^{\top} ],
\end{align*}
where $D$ is the $p \times p$ diagonal matrix given by
\begin{align*}
	D_{kk} &= -\frac{1}{\lambda_k} (n+p-2k-1) (n-3p+2k-1) + 4 \sum_{l \neq k} \frac{k-l}{\lambda_k-\lambda_l}.
\end{align*}
	Thus, Stein's estimator dominates the maximum likelihood estimator under the matrix quadratic loss when $n \geq 3p-1$.
\end{proposition}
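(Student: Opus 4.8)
The plan is to invoke Theorem~\ref{th_shr} with $c_k = n+p-2k-1$ and then to reduce the domination statement to a pointwise sign condition on the diagonal entries of $D$. Since $V$ is orthogonal, $VDV^\top\preceq O$ holds if and only if $D\preceq O$, i.e. $D_{kk}\le 0$ for every $k$; and because the L\"{o}wner order is preserved under expectation, it suffices to establish $D_{kk}\le 0$ almost surely (the eigenvalues $\lambda_1,\dots,\lambda_p$ being distinct with probability one). The maximum likelihood estimator $\hat{M}=X$ has risk $nI_p$, so $D\preceq O$ immediately yields $R(M,\hat{M})\preceq nI_p$, i.e. domination. Thus the two tasks are to derive the stated formula for $D_{kk}$ and to check its sign.

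For the formula, I would substitute $c_k=n+p-2k-1$ and $c_l=n+p-2l-1$ into
\begin{align*}
	D_{kk} = \frac{1}{\lambda_k} c_k(c_k-2n+4) - \frac{2}{\lambda_k}\sum_{l\neq k}\frac{c_k\lambda_l - c_l\lambda_k}{\lambda_k-\lambda_l}
\end{align*}
from Theorem~\ref{th_shr}. Setting $a=n+p-1$ so that $c_k=a-2k$, the summand rearranges as
\begin{align*}
	\frac{c_k\lambda_l - c_l\lambda_k}{\lambda_k-\lambda_l} = -a + 2l + \frac{2(l-k)\lambda_l}{\lambda_k-\lambda_l},
\end{align*}
and then $\lambda_l/(\lambda_k-\lambda_l) = -1 + \lambda_k/(\lambda_k-\lambda_l)$ isolates the spectral term $4\sum_{l\neq k}(k-l)/(\lambda_k-\lambda_l)$ appearing in the statement. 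All remaining contributions are free of the $\lambda$'s: evaluating $\sum_{l\neq k}1=p-1$, $\sum_{l\neq k}l=\tfrac{p(p+1)}{2}-k$, and $\sum_{l\neq k}(l-k)=\tfrac{p(p+1)}{2}-kp$, these collapse to $\tfrac{2(p-1)}{\lambda_k}(n+p-2k-1)$, which combines with $\tfrac{1}{\lambda_k}c_k(c_k-2n+4)$ to give $-\tfrac{1}{\lambda_k}(n+p-2k-1)(n-3p+2k-1)$. This is the bookkeeping-heavy step and the main obstacle: one must track the exclusion of the $l=k$ term in each finite sum and verify the exact cancellation of the several quadratics in $n,p,k$.

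With the formula in hand, I would bound the two terms of $D_{kk}$ separately under $n\ge 3p-1$. For $1\le k\le p$ both factors of the first term are nonnegative---$n-3p+2k-1$ is smallest at $k=1$, where it equals $n-3p+1\ge 0$, and $n+p-2k-1$ is smallest at $k=p$, where it equals $n-p-1\ge 2p-2\ge 0$---so $-\tfrac{1}{\lambda_k}(n+p-2k-1)(n-3p+2k-1)\le 0$. For the spectral term, the ordering $\lambda_1\ge\cdots\ge\lambda_p$ (inherited from $\sigma_1\ge\cdots\ge\sigma_p$) forces the numerator $k-l$ and the denominator $\lambda_k-\lambda_l$ to have opposite signs for every $l\neq k$, so each ratio is nonpositive and $4\sum_{l\neq k}(k-l)/(\lambda_k-\lambda_l)\le 0$. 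Hence $D_{kk}\le 0$ for all $k$, giving $D\preceq O$ almost surely and, after taking expectation, the asserted domination.
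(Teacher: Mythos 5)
Your proposal is correct and follows essentially the same route as the paper: substitute $c_k=n+p-2k-1$ into Theorem~\ref{th_shr}, rearrange the sum so that the purely spectral term $4\sum_{l\neq k}(k-l)/(\lambda_k-\lambda_l)$ separates from the $\lambda$-free contributions (your expansion around $c_l$ followed by $\lambda_l/(\lambda_k-\lambda_l)=-1+\lambda_k/(\lambda_k-\lambda_l)$ reproduces exactly the paper's identity $(c_k\lambda_l-c_l\lambda_k)/(\lambda_k-\lambda_l)=-c_k-2(k-l)\lambda_k/(\lambda_k-\lambda_l)$), and then verify that both terms of $D_{kk}$ are nonpositive when $n\geq 3p-1$ using the ordering $\lambda_1\geq\cdots\geq\lambda_p$. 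Your additional remarks on why pointwise negative semidefiniteness survives conjugation by $V$ and expectation are a slightly more explicit justification of the final domination step than the paper gives, but the argument is the same.
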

\begin{proof}
	By substituting $c_k=n+p-2k-1$ into Theorem~\ref{th_shr},
\begin{align*}
	D_{kk} &= \frac{1}{\lambda_k} (n+p-2k-1) (-n+p-2k+3) - \frac{2}{\lambda_k} \sum_{l \neq k} \frac{(n+p-2k-1) \lambda_l - (n+p-2l-1) \lambda_k}{\lambda_k-\lambda_l} \\
	&= \frac{1}{\lambda_k} (n+p-2k-1) (-n+p-2k+3) - \frac{2}{\lambda_k} \sum_{l \neq k} \left( -\frac{2(k-l) \lambda_k}{\lambda_k-\lambda_l} -(n+p-2k-1) \right) \\
	&= \frac{1}{\lambda_k} (n+p-2k-1) (-n+p-2k+3) + 4 \sum_{l \neq k} \frac{k-l}{\lambda_k-\lambda_l} +\frac{2}{\lambda_k} (p-1)(n+p-2k-1) \\
	&= -\frac{1}{\lambda_k} (n+p-2k-1) (n-3p+2k-1) + 4 \sum_{l \neq k} \frac{k-l}{\lambda_k-\lambda_l}.
\end{align*}
	The second term is nonpositive since $\lambda_1 \geq \lambda_2 \geq \dots \geq \lambda_p$.
	When $n \geq 3p-1$, the first term is also nonpositive and thus
\begin{align*}
	{\rm E}_M (\hat{M}-M)^{\top} (\hat{M}-M) \preceq n I_p.
\end{align*}
\end{proof}

Numerical results indicate that the bound of $n$ in Proposition~\ref{prop_stein} may be relaxed to $n \geq p+2$, which is the same bound with the Efron--Morris estimator.
See Appendix.

Finally, we present simulation results to compare Stein's estimator and the Efron--Morris estimator.

Figure~\ref{fig1} compares the Frobenius risk of Stein's estimator and the Efron--Morris estimator when $n=10$ and $p=3$.
It implies that Stein's estimator dominates the Efron--Morris estimator under the Frobenius loss.
Both estimators attain constant risk reduction when some singular values of $M$ are small, regardless of the magnitude of the other singular values.
Thus, both estimators work well for low rank matrices.
See \cite{Matsuda22} for related discussions.

Figure~\ref{fig2} plots the three eigenvalues $\lambda_1 \geq \lambda_2 \geq \lambda_3$ of the matrix quadratic risk of Stein's estimator and the Efron--Morris estimator in the same setting with Figure~\ref{fig1}.
Since all eigenvalues are less than $n=10$, the matrix quadratic risk $R(M,\hat{M})$ satisfies $R(M,\hat{M}) \preceq n I_p$ for every $M$.
Thus, both estimators dominate the maximum likelihood estimator under the matrix quadratic loss, which is compatible with \eqref{EMrisk} and Proposition~\ref{prop_stein}.
Also, each eigenvalue for Stein's estimator is smaller than the corresponding one for the Efron--Morris estimator, which suggests that Stein's estimator dominates the Efron--Morris estimator even under the matrix quadratic loss.
It is an interesting future work to develop its rigorous theory.

\begin{figure}[h]
	\centering
	\begin{tikzpicture}
		\begin{axis}[
			title={$\sigma_2(M)=\sigma_3(M)=0$},
			xlabel={$\sigma_1(M)$}, xmin=0, xmax=20,
			ylabel={risk}, ymin=0, ymax=31,
			width=0.45\linewidth
			]
			\addplot[thick, color=black,
			filter discard warning=false, unbounded coords=discard
			] table {
				0    7.6561
				1.0000    8.3253
				2.0000   10.0204
				3.0000   11.8952
				4.0000   13.4096
				5.0000   14.4254
				6.0000   15.0407
				7.0000   15.4584
				8.0000   15.6807
				9.0000   15.8459
				10.0000   15.9756
				11.0000   16.0724
				12.0000   16.1727
				13.0000   16.2003
				14.0000   16.2585
				15.0000   16.3073
				16.0000   16.3060
				17.0000   16.3464
				18.0000   16.4030
				19.0000   16.4239
				20.0000   16.4070
			};
			\addplot[dashed, thick, color=black,
			filter discard warning=false, unbounded coords=discard
			] table {
				0   12.0064
				1.0000   12.5152
				2.0000   13.7815
				3.0000   15.0060
				4.0000   15.8875
				5.0000   16.4985
				6.0000   16.8850
				7.0000   17.1756
				8.0000   17.3153
				9.0000   17.4475
				10.0000   17.5487
				11.0000   17.6208
				12.0000   17.7146
				13.0000   17.7286
				14.0000   17.7761
				15.0000   17.8253
				16.0000   17.8041
				17.0000   17.8483
				18.0000   17.8985
				19.0000   17.9192
				20.0000   17.8934
			};
		\end{axis}
	\end{tikzpicture} 
	\begin{tikzpicture}
		\begin{axis}[
			title={$\sigma_1(M)=20, \ \sigma_3(M)=0$},
			xlabel={$\sigma_2(M)$}, xmin=0, xmax=20,
			ylabel={risk}, ymin=0, ymax=31,
			width=0.45\linewidth
			]
			\addplot[thick, color=black,
			filter discard warning=false, unbounded coords=discard
			] table {
				0   16.3957
				1.0000   17.1071
				2.0000   18.6795
				3.0000   20.2608
				4.0000   21.4868
				5.0000   22.2461
				6.0000   22.7175
				7.0000   22.9768
				8.0000   23.1834
				9.0000   23.2943
				10.0000   23.3902
				11.0000   23.4709
				12.0000   23.5347
				13.0000   23.5988
				14.0000   23.6282
				15.0000   23.6586
				16.0000   23.6236
				17.0000   23.6407
				18.0000   23.6504
				19.0000   23.6430
				20.0000   23.5929
			};
			\addplot[dashed, thick, color=black,
			filter discard warning=false, unbounded coords=discard
			] table {
				0   17.8886
				1.0000   18.5126
				2.0000   19.8290
				3.0000   21.0409
				4.0000   21.9492
				5.0000   22.5101
				6.0000   22.8904
				7.0000   23.0995
				8.0000   23.2832
				9.0000   23.3810
				10.0000   23.4595
				11.0000   23.5352
				12.0000   23.5990
				13.0000   23.6604
				14.0000   23.6928
				15.0000   23.7257
				16.0000   23.7010
				17.0000   23.7378
				18.0000   23.7836
				19.0000   23.8171
				20.0000   23.7831
			};
		\end{axis}
	\end{tikzpicture} 
	\caption{Frobenius risk of the Efron--Morris estimator (dashed) and Stein's estimator (solid) for $n=10$ and $p=3$. Left: $\sigma_2(M)=\sigma_3(M)=0$. Right: $\sigma_1(M)=20$, $\sigma_3(M)=0$.}
	\label{fig1}
\end{figure}
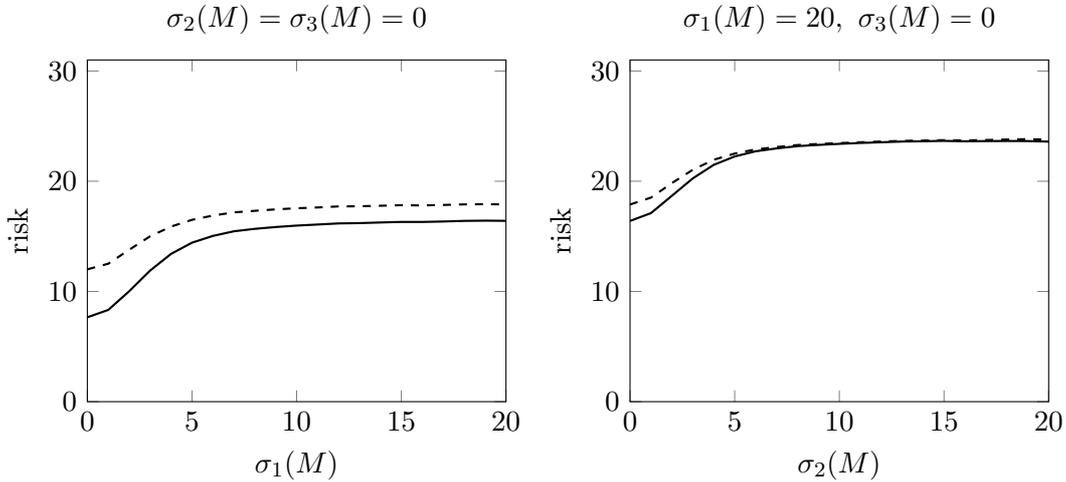

\begin{figure}[h]
	\centering
	\begin{tikzpicture}
		\begin{axis}[
			title={$\sigma_2(M)=\sigma_3(M)=0$},
			xlabel={$\sigma_1(M)$}, xmin=0, xmax=20,
			ylabel={eigenvalue}, ymin=0, ymax=11,
			width=0.45\linewidth
			]
			\addplot[thick, color=black,
			filter discard warning=false, unbounded coords=discard
			] table {
				0    2.5344
				1.0000    2.5454
				2.0000    2.6200
				3.0000    2.7679
				4.0000    2.9289
				5.0000    3.0208
				6.0000    3.1003
				7.0000    3.1484
				8.0000    3.1707
				9.0000    3.1963
				10.0000    3.1888
				11.0000    3.2131
				12.0000    3.2268
				13.0000    3.2288
				14.0000    3.2332
				15.0000    3.2444
				16.0000    3.2380
				17.0000    3.2377
				18.0000    3.2541
				19.0000    3.2622
				20.0000    3.2480
			};
			\addplot[thick, color=black,
			filter discard warning=false, unbounded coords=discard
			] table {
				0    2.5531
				1.0000    2.5734
				2.0000    2.6474
				3.0000    2.7968
				4.0000    2.9370
				5.0000    3.0497
				6.0000    3.1088
				7.0000    3.1679
				8.0000    3.1963
				9.0000    3.2075
				10.0000    3.2279
				11.0000    3.2162
				12.0000    3.2409
				13.0000    3.2383
				14.0000    3.2557
				15.0000    3.2598
				16.0000    3.2474
				17.0000    3.2818
				18.0000    3.2779
				19.0000    3.2684
				20.0000    3.2629
			};
			\addplot[thick, color=black,
			filter discard warning=false, unbounded coords=discard
			] table {
				0    2.5686
				1.0000    3.2065
				2.0000    4.7530
				3.0000    6.3304
				4.0000    7.5436
				5.0000    8.3549
				6.0000    8.8316
				7.0000    9.1421
				8.0000    9.3138
				9.0000    9.4421
				10.0000    9.5589
				11.0000    9.6431
				12.0000    9.7050
				13.0000    9.7333
				14.0000    9.7695
				15.0000    9.8031
				16.0000    9.8207
				17.0000    9.8269
				18.0000    9.8711
				19.0000    9.8932
				20.0000    9.8961
			};
			\addplot[dashed, thick, color=black,
			filter discard warning=false, unbounded coords=discard
			] table {
				0    3.9882
				1.0000    3.9768
				2.0000    3.9831
				3.0000    3.9941
				4.0000    4.0045
				5.0000    3.9837
				6.0000    3.9921
				7.0000    4.0005
				8.0000    3.9955
				9.0000    3.9998
				10.0000    3.9753
				11.0000    3.9943
				12.0000    4.0054
				13.0000    3.9974
				14.0000    3.9981
				15.0000    4.0053
				16.0000    3.9906
				17.0000    3.9886
				18.0000    4.0024
				19.0000    4.0124
				20.0000    3.9949
			};
			\addplot[dashed, thick, color=black,
			filter discard warning=false, unbounded coords=discard
			] table {
				0    4.0055
				1.0000    4.0031
				2.0000    4.0070
				3.0000    4.0249
				4.0000    4.0160
				5.0000    4.0222
				6.0000    4.0102
				7.0000    4.0221
				8.0000    4.0165
				9.0000    4.0136
				10.0000    4.0223
				11.0000    3.9954
				12.0000    4.0144
				13.0000    4.0074
				14.0000    4.0205
				15.0000    4.0229
				16.0000    4.0022
				17.0000    4.0382
				18.0000    4.0317
				19.0000    4.0201
				20.0000    4.0076
			};
			\addplot[dashed, thick, color=black,
			filter discard warning=false, unbounded coords=discard
			] table {
				0    4.0127
				1.0000    4.5353
				2.0000    5.7913
				3.0000    6.9870
				4.0000    7.8670
				5.0000    8.4926
				6.0000    8.8827
				7.0000    9.1530
				8.0000    9.3033
				9.0000    9.4341
				10.0000    9.5511
				11.0000    9.6311
				12.0000    9.6948
				13.0000    9.7239
				14.0000    9.7576
				15.0000    9.7971
				16.0000    9.8113
				17.0000    9.8215
				18.0000    9.8645
				19.0000    9.8867
				20.0000    9.8909
			};
		\end{axis}
	\end{tikzpicture} 
	\begin{tikzpicture}
		\begin{axis}[
			title={$\sigma_1(M)=20, \ \sigma_3(M)=0$},
			xlabel={$\sigma_2(M)$}, xmin=0, xmax=20,
			ylabel={eigenvalue}, ymin=0, ymax=11,
			width=0.45\linewidth
			]
			\addplot[thick, color=black,
			filter discard warning=false, unbounded coords=discard
			] table {
				0    3.2521
				1.0000    3.2962
				2.0000    3.3713
				3.0000    3.5289
				4.0000    3.7097
				5.0000    3.8018
				6.0000    3.8712
				7.0000    3.8815
				8.0000    3.9267
				9.0000    3.9215
				10.0000    3.9573
				11.0000    3.9458
				12.0000    3.9540
				13.0000    3.9465
				14.0000    3.9696
				15.0000    3.9632
				16.0000    3.9636
				17.0000    3.9637
				18.0000    3.9821
				19.0000    3.9936
				20.0000    3.9847
			};
			\addplot[thick, color=black,
			filter discard warning=false, unbounded coords=discard
			] table {
				0    3.2688
				1.0000    3.9190
				2.0000    5.4156
				3.0000    6.8457
				4.0000    7.8787
				5.0000    8.5273
				6.0000    8.9293
				7.0000    9.2001
				8.0000    9.3552
				9.0000    9.4964
				10.0000    9.5656
				11.0000    9.6317
				12.0000    9.6866
				13.0000    9.7614
				14.0000    9.7632
				15.0000    9.7994
				16.0000    9.7768
				17.0000    9.7945
				18.0000    9.8091
				19.0000    9.7868
				20.0000    9.8012
			};
			\addplot[thick, color=black,
			filter discard warning=false, unbounded coords=discard
			] table {
				0    9.8748
				1.0000    9.8920
				2.0000    9.8926
				3.0000    9.8861
				4.0000    9.8983
				5.0000    9.9170
				6.0000    9.9171
				7.0000    9.8952
				8.0000    9.9015
				9.0000    9.8764
				10.0000    9.8672
				11.0000    9.8934
				12.0000    9.8941
				13.0000    9.8910
				14.0000    9.8954
				15.0000    9.8960
				16.0000    9.8832
				17.0000    9.8825
				18.0000    9.8592
				19.0000    9.8627
				20.0000    9.8070
			};
			\addplot[dashed, thick, color=black,
			filter discard warning=false, unbounded coords=discard
			] table {
				0    4.0022
				1.0000    4.0170
				2.0000    3.9911
				3.0000    3.9863
				4.0000    4.0117
				5.0000    3.9981
				6.0000    4.0087
				7.0000    3.9855
				8.0000    4.0109
				9.0000    3.9914
				10.0000    4.0180
				11.0000    3.9990
				12.0000    4.0017
				13.0000    3.9900
				14.0000    4.0100
				15.0000    4.0011
				16.0000    3.9991
				17.0000    3.9976
				18.0000    4.0147
				19.0000    4.0248
				20.0000    4.0144
			};
			\addplot[dashed, thick, color=black,
			filter discard warning=false, unbounded coords=discard
			] table {
				0    4.0174
				1.0000    4.6103
				2.0000    5.9525
				3.0000    7.1753
				4.0000    8.0470
				5.0000    8.6031
				6.0000    8.9734
				7.0000    9.2284
				8.0000    9.3764
				9.0000    9.5167
				10.0000    9.5803
				11.0000    9.6471
				12.0000    9.7045
				13.0000    9.7796
				14.0000    9.7847
				15.0000    9.8234
				16.0000    9.8059
				17.0000    9.8342
				18.0000    9.8632
				19.0000    9.8616
				20.0000    9.8812
			};
			\addplot[dashed, thick, color=black,
			filter discard warning=false, unbounded coords=discard
			] table {
				0    9.8690
				1.0000    9.8853
				2.0000    9.8854
				3.0000    9.8793
				4.0000    9.8905
				5.0000    9.9089
				6.0000    9.9082
				7.0000    9.8855
				8.0000    9.8960
				9.0000    9.8729
				10.0000    9.8613
				11.0000    9.8891
				12.0000    9.8928
				13.0000    9.8908
				14.0000    9.8980
				15.0000    9.9011
				16.0000    9.8961
				17.0000    9.9061
				18.0000    9.9056
				19.0000    9.9307
				20.0000    9.8875
			};
		\end{axis}
	\end{tikzpicture} 
	\caption{Eigenvalues of the matrix quadratic risk of the Efron--Morris estimator (dashed) and Stein's estimator (solid) for $n=10$ and $p=3$. Left: $\sigma_2(M)=\sigma_3(M)=0$. Right: $\sigma_1(M)=20$, $\sigma_3(M)=0$. In the left panel, the second and third eigenvalues of each estimator almost overlap. In the right panel, the first eigenvalues of two estimators almost overlap.}
	\label{fig2}
\end{figure}
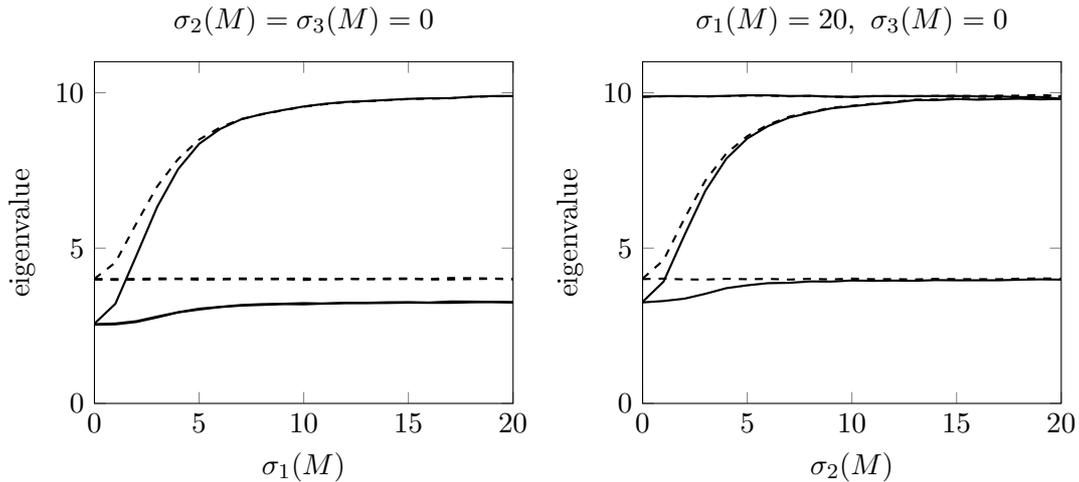

Figures~\ref{fig3} and \ref{fig4} present the results for the positive-part estimators in the same settings with Figures~\ref{fig1} and \ref{fig2}, respectively.
They show qualitatively the same behavior.

\begin{figure}[h]
	\centering
	\begin{tikzpicture}
	\begin{axis}[
		title={$\sigma_2(M)=\sigma_3(M)=0$},
		xlabel={$\sigma_1(M)$}, xmin=0, xmax=20,
		ylabel={risk}, ymin=0, ymax=31,
		width=0.45\linewidth
		]
		\addplot[thick, color=black,
		filter discard warning=false, unbounded coords=discard
		] table {
         0    3.9709
1.0000    4.7983
2.0000    6.8373
3.0000    9.1423
4.0000   10.9578
5.0000   12.1565
6.0000   12.8787
7.0000   13.3255
8.0000   13.6297
9.0000   13.8371
10.0000   13.9842
11.0000   14.1013
12.0000   14.1600
13.0000   14.2149
14.0000   14.2769
15.0000   14.3021
16.0000   14.3347
17.0000   14.3845
18.0000   14.3947
19.0000   14.4115
20.0000   14.4626
		};
		\addplot[dashed, thick, color=black,
		filter discard warning=false, unbounded coords=discard
		] table {
         0    8.6756
1.0000    9.3394
2.0000   10.8799
3.0000   12.4440
4.0000   13.5836
5.0000   14.3309
6.0000   14.8161
7.0000   15.1407
8.0000   15.3763
9.0000   15.5358
10.0000   15.6492
11.0000   15.7383
12.0000   15.7890
13.0000   15.8252
14.0000   15.8845
15.0000   15.8985
16.0000   15.9271
17.0000   15.9752
18.0000   15.9773
19.0000   15.9931
20.0000   16.0401
		};
	\end{axis}
\end{tikzpicture} 
	\begin{tikzpicture}
		\begin{axis}[
			title={$\sigma_1(M)=20, \ \sigma_3(M)=0$},
			xlabel={$\sigma_2(M)$}, xmin=0, xmax=20,
			ylabel={risk}, ymin=0, ymax=31,
			width=0.45\linewidth
			]
			\addplot[thick, color=black,
			filter discard warning=false, unbounded coords=discard
			] table {
         0   14.4602
1.0000   15.2334
2.0000   17.1631
3.0000   19.1134
4.0000   20.5513
5.0000   21.3643
6.0000   21.8847
7.0000   22.2166
8.0000   22.4346
9.0000   22.5602
10.0000   22.6798
11.0000   22.7293
12.0000   22.8334
13.0000   22.8236
14.0000   22.9061
15.0000   22.9194
16.0000   22.9337
17.0000   22.9385
18.0000   22.8915
19.0000   22.8769
20.0000   22.8972
			};
			\addplot[dashed, thick, color=black,
			filter discard warning=false, unbounded coords=discard
			] table {
         0   16.0340
1.0000   16.7217
2.0000   18.3662
3.0000   19.9160
4.0000   21.0179
5.0000   21.6324
6.0000   22.0572
7.0000   22.3384
8.0000   22.5305
9.0000   22.6417
10.0000   22.7524
11.0000   22.7955
12.0000   22.8949
13.0000   22.8812
14.0000   22.9683
15.0000   22.9850
16.0000   23.0135
17.0000   23.0361
18.0000   23.0266
19.0000   23.0496
20.0000   23.0879
			};
		\end{axis}
	\end{tikzpicture} 
	\caption{Frobenius risk of the positive-part Efron--Morris estimator (dashed) and postive-part Stein's estimator (solid) for $n=10$ and $p=3$. Left: $\sigma_2(M)=\sigma_3(M)=0$. Right: $\sigma_1(M)=20$, $\sigma_3(M)=0$.}
	\label{fig3}
\end{figure}

\begin{figure}[h]
	\centering
	\begin{tikzpicture}
		\begin{axis}[
		title={$\sigma_2(M)=\sigma_3(M)=0$},
			xlabel={$\sigma_1(M)$}, xmin=0, xmax=20,
			ylabel={eigenvalue}, ymin=0, ymax=11,
			width=0.45\linewidth
			]
			\addplot[thick, color=black,
			filter discard warning=false, unbounded coords=discard
			] table {
         0    1.3151
1.0000    1.3670
2.0000    1.4940
3.0000    1.6851
4.0000    1.8727
5.0000    2.0189
6.0000    2.0970
7.0000    2.1429
8.0000    2.1847
9.0000    2.2073
10.0000    2.2245
11.0000    2.2336
12.0000    2.2545
13.0000    2.2505
14.0000    2.2663
15.0000    2.2626
16.0000    2.2678
17.0000    2.2723
18.0000    2.2655
19.0000    2.2771
20.0000    2.2852
			};
			\addplot[thick, color=black,
			filter discard warning=false, unbounded coords=discard
			] table {
         0    1.3275
1.0000    1.3823
2.0000    1.5119
3.0000    1.6969
4.0000    1.8860
5.0000    2.0237
6.0000    2.1029
7.0000    2.1625
8.0000    2.1901
9.0000    2.2216
10.0000    2.2413
11.0000    2.2488
12.0000    2.2616
13.0000    2.2591
14.0000    2.2726
15.0000    2.2706
16.0000    2.2746
17.0000    2.2875
18.0000    2.2738
19.0000    2.2838
20.0000    2.3029
			};
			\addplot[thick, color=black,
			filter discard warning=false, unbounded coords=discard
			] table {
         0    1.3283
1.0000    2.0490
2.0000    3.8314
3.0000    5.7604
4.0000    7.1991
5.0000    8.1139
6.0000    8.6787
7.0000    9.0201
8.0000    9.2549
9.0000    9.4083
10.0000    9.5184
11.0000    9.6189
12.0000    9.6438
13.0000    9.7053
14.0000    9.7380
15.0000    9.7689
16.0000    9.7923
17.0000    9.8247
18.0000    9.8554
19.0000    9.8506
20.0000    9.8746
			};
			\addplot[dashed, thick, color=black,
filter discard warning=false, unbounded coords=discard
] table {
         0    2.8774
1.0000    2.9068
2.0000    2.9259
3.0000    2.9688
4.0000    2.9994
5.0000    3.0334
6.0000    3.0356
7.0000    3.0383
8.0000    3.0538
9.0000    3.0521
10.0000    3.0573
11.0000    3.0527
12.0000    3.0687
13.0000    3.0584
14.0000    3.0748
15.0000    3.0644
16.0000    3.0669
17.0000    3.0697
18.0000    3.0605
19.0000    3.0689
20.0000    3.0765
};
\addplot[dashed, thick, color=black,
filter discard warning=false, unbounded coords=discard
] table {

0    2.8972
1.0000    2.9242
2.0000    2.9575
3.0000    2.9846
4.0000    3.0186
5.0000    3.0380
6.0000    3.0428
7.0000    3.0620
8.0000    3.0604
9.0000    3.0746
10.0000    3.0783
11.0000    3.0763
12.0000    3.0850
13.0000    3.0694
14.0000    3.0776
15.0000    3.0727
16.0000    3.0750
17.0000    3.0837
18.0000    3.0673
19.0000    3.0745
20.0000    3.0935
};
\addplot[dashed, thick, color=black,
filter discard warning=false, unbounded coords=discard
] table {
         0    2.9011
1.0000    3.5084
2.0000    4.9965
3.0000    6.4907
4.0000    7.5657
5.0000    8.2596
6.0000    8.7376
7.0000    9.0404
8.0000    9.2621
9.0000    9.4091
10.0000    9.5136
11.0000    9.6094
12.0000    9.6352
13.0000    9.6973
14.0000    9.7322
15.0000    9.7614
16.0000    9.7851
17.0000    9.8219
18.0000    9.8495
19.0000    9.8497
20.0000    9.8700
};
		\end{axis}
	\end{tikzpicture} 
	\begin{tikzpicture}
		\begin{axis}[
			title={$\sigma_1(M)=20, \ \sigma_3(M)=0$},
			xlabel={$\sigma_2(M)$}, xmin=0, xmax=20,
			ylabel={eigenvalue}, ymin=0, ymax=11,
			width=0.45\linewidth
			]
			\addplot[thick, color=black,
filter discard warning=false, unbounded coords=discard
] table {
         0    2.2806
1.0000    2.3366
2.0000    2.5072
3.0000    2.7235
4.0000    2.9231
5.0000    3.0528
6.0000    3.1159
7.0000    3.1797
8.0000    3.1893
9.0000    3.2037
10.0000    3.2301
11.0000    3.2139
12.0000    3.2306
13.0000    3.2421
14.0000    3.2339
15.0000    3.2511
16.0000    3.2586
17.0000    3.2517
18.0000    3.2538
19.0000    3.2518
20.0000    3.2577
};
\addplot[thick, color=black,
filter discard warning=false, unbounded coords=discard
] table {
         0    2.2948
1.0000    3.0352
2.0000    4.7744
3.0000    6.5068
4.0000    7.7223
5.0000    8.4342
6.0000    8.8589
7.0000    9.1452
8.0000    9.3468
9.0000    9.4629
10.0000    9.5493
11.0000    9.6245
12.0000    9.6982
13.0000    9.6845
14.0000    9.7636
15.0000    9.7693
16.0000    9.7929
17.0000    9.8252
18.0000    9.7933
19.0000    9.7994
20.0000    9.8097
};
\addplot[thick, color=black,
filter discard warning=false, unbounded coords=discard
] table {
         0    9.8848
1.0000    9.8617
2.0000    9.8815
3.0000    9.8832
4.0000    9.9059
5.0000    9.8773
6.0000    9.9098
7.0000    9.8917
8.0000    9.8985
9.0000    9.8937
10.0000    9.9004
11.0000    9.8909
12.0000    9.9045
13.0000    9.8969
14.0000    9.9086
15.0000    9.8989
16.0000    9.8821
17.0000    9.8616
18.0000    9.8444
19.0000    9.8257
20.0000    9.8298
};
			\addplot[dashed, thick, color=black,
			filter discard warning=false, unbounded coords=discard
			] table {
         0    3.0707
1.0000    3.0874
2.0000    3.1431
3.0000    3.1854
4.0000    3.2260
5.0000    3.2496
6.0000    3.2528
7.0000    3.2838
8.0000    3.2728
9.0000    3.2738
10.0000    3.2907
11.0000    3.2671
12.0000    3.2787
13.0000    3.2860
14.0000    3.2744
15.0000    3.2889
16.0000    3.2944
17.0000    3.2855
18.0000    3.2862
19.0000    3.2828
20.0000    3.2874
			};
			\addplot[dashed, thick, color=black,
			filter discard warning=false, unbounded coords=discard
			] table {
         0    3.0853
1.0000    3.7749
2.0000    5.3474
3.0000    6.8513
4.0000    7.8931
5.0000    8.5145
6.0000    8.9015
7.0000    9.1723
8.0000    9.3654
9.0000    9.4821
10.0000    9.5666
11.0000    9.6422
12.0000    9.7156
13.0000    9.7010
14.0000    9.7846
15.0000    9.7928
16.0000    9.8239
17.0000    9.8644
18.0000    9.8510
19.0000    9.8718
20.0000    9.8891
			};
			\addplot[dashed, thick, color=black,
			filter discard warning=false, unbounded coords=discard
			] table {
         0    9.8780
1.0000    9.8594
2.0000    9.8757
3.0000    9.8792
4.0000    9.8987
5.0000    9.8683
6.0000    9.9030
7.0000    9.8823
8.0000    9.8924
9.0000    9.8858
10.0000    9.8951
11.0000    9.8862
12.0000    9.9006
13.0000    9.8942
14.0000    9.9093
15.0000    9.9033
16.0000    9.8952
17.0000    9.8862
18.0000    9.8894
19.0000    9.8951
20.0000    9.9115
			};
		\end{axis}
	\end{tikzpicture} 
	\caption{Eigenvalues of the matrix quadratic risk of the positive-part Efron--Morris estimator (dashed) and postive-part Stein's estimator (solid) for $n=10$ and $p=3$. Left: $\sigma_2(M)=\sigma_3(M)=0$. Right: $\sigma_1(M)=20$, $\sigma_3(M)=0$. In the left panel, the second and third eigenvalues of each estimator almost overlap. In the right panel, the first eigenvalues of two estimators almost overlap.}
	\label{fig4}
\end{figure}

\section*{Acknowledgements}
The author thanks the reviewer for constructive comments.
The author thanks William Strawderman for helpful comments.
This work was supported by JSPS KAKENHI Grant Numbers 21H05205, 22K17865 and JST Moonshot Grant Number JPMJMS2024.

\appendix
\section{Bound of $n$ in Proposition~\ref{prop_stein}}
Figure~\ref{fig_appendix} plots the largest eigenvalue of the matrix quadratic risk of Stein's estimator when $\sigma_1(M)=\dots=\sigma_p(M)=50$.
It indicates that the bound of $n$ in Proposition~\ref{prop_stein} may be relaxed to $n \geq p+2$.

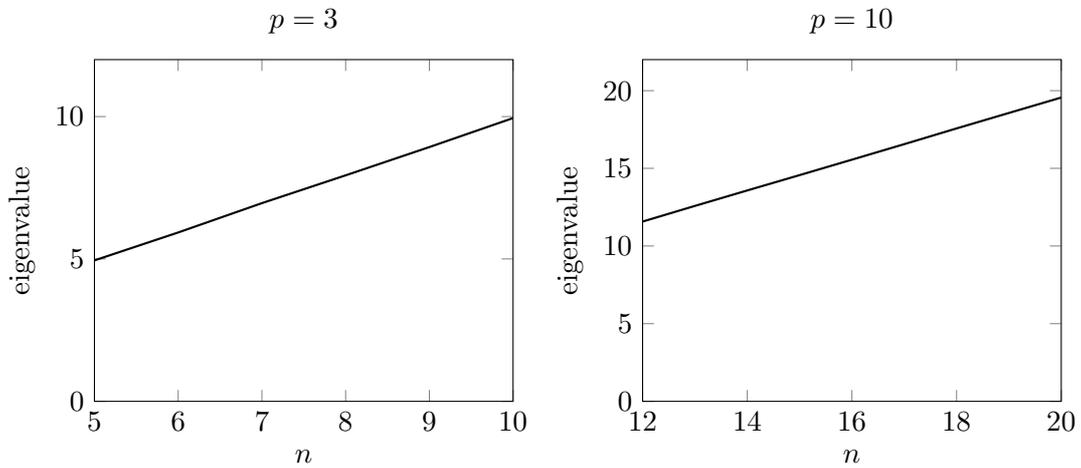
\begin{figure}[h]
	\centering
	\begin{tikzpicture}
		\begin{axis}[
			title={$p=3$},
			xlabel={$n$}, xmin=5, xmax=10,
			ylabel={eigenvalue}, ymin=0, ymax=12,
			width=0.45\linewidth
			]
			\addplot[thick, color=black,
			filter discard warning=false, unbounded coords=discard
			] table {
    5.0000    4.9489
6.0000    5.9295
7.0000    6.9599
8.0000    7.9360
9.0000    8.9289
10.0000    9.9469
			};
		\end{axis}
	\end{tikzpicture} 
	\begin{tikzpicture}
		\begin{axis}[
			title={$p=10$},
			xlabel={$n$}, xmin=12, xmax=20,
			ylabel={eigenvalue}, ymin=0, ymax=22,
			width=0.45\linewidth
			]
			\addplot[thick, color=black,
			filter discard warning=false, unbounded coords=discard
			] table {
   12.0000   11.5728
13.0000   12.5818
14.0000   13.5771
15.0000   14.5643
16.0000   15.5576
17.0000   16.5572
18.0000   17.5705
19.0000   18.5652
20.0000   19.5575
			};
		\end{axis}
	\end{tikzpicture} 
	\caption{Largest eigenvalue of the matrix quadratic risk of Stein's estimator when $\sigma_1(M)=\dots=\sigma_p(M)=50$. Left: $p=3$. Right: $p=10$.}
	\label{fig_appendix}
\end{figure}

\end{document}